\newtheorem{theorem}{Theorem}[section]
\newtheorem{definition}[theorem]{Definition}
\newtheorem{lemma}[theorem]{Lemma}
\newtheorem{remark}[theorem]{Remark}
\newenvironment{proof}[1][Proof]{\textbf{#1.} }{\hfill\rule{0.5em}{0.5em}}
{\catcode`\@=11\global\let\AddToReset=\@addtoreset
\AddToReset{equation}{section}

\AddToReset{theorem}{section}

\newcommand{\cM}{{\mathcal M}}
\newcommand{\cP}{{\mathcal P}}

\newcommand{\R}{{\mathbb R}}

\newcommand{\N}{{\mathbb N}}

\newcommand{\bW}{{\mathbf W}}  
\newcommand{\bm}{{\mathbf m}}

\newcommand{\argmin}{{\rm argmin}}

\usepackage{enumerate}

\begin{document}
\title{Modified Wasserstein gradient flow formulation of time-fractional porous medium equations with nonlocal pressure}
	\author{
	{Nhan-Phu Chung\thanks{E-mail address: phuchung82@gmail.com, Institute of Applied Mathematics, University of Economics Ho Chi Minh City, Vietnam.} , Thanh-Son Trinh\thanks{E-mail address: trinhthanhson@iuh.edu.vn, Faculty of Information Technology, Industrial University of Ho Chi Minh City, Vietnam.} } }
\date{}  
\maketitle
\begin{abstract}
We consider a class of time-fractional porous medium equations with nonlocal pressure. We show the existence of their weak solutions by proposing a JKO scheme for modified Wasserstein distance and a square fractional Sobolev norm. Moreover, the regularization effect and the $L^p$ norm estimate are established in this paper.

\noindent
\end{abstract}  
\section{Introduction}
In this paper, we investigate the following two problems
\begin{align}\label{E-time-fractional eq}
\partial_t^\alpha u=\textup{div}(u^{\beta}\nabla(-\Delta)^{-s}u) \mbox{ in } \R^d,
\end{align} 
\begin{align}\label{E-beta=0}
\partial^\alpha_t u=-(-\Delta)^{1-s}u \text{ in }\mathbb{R}^d,
\end{align} 
where $0<\alpha<1, 0<\beta\leq 1, d\geq 1$ and $0<s<\min\{1,d/2\}$. Note that when $\beta\to 0$ then the equation \eqref{E-time-fractional eq} degenerates to the equation \eqref{E-beta=0}. 

 The fractional-order derivative $\partial^\alpha_t u$ is defined as 
$$\partial^\alpha_t u(t):=\frac{1}{\Gamma(1-\alpha)}\int_0^t(t-r)^{-\alpha}u'(r)dr,$$ 
with $\Gamma$ is the Gamma function defined by $\Gamma(z):=\int_0^\infty r^{z-1}e^{-r}dr.$ For $0<r<\min\{1,d/2\}$, the $r$-fractional Laplacian $(-\Delta)^r$ on $\R^d$ is defined by means of Fourier transform as \begin{align*}
(\widehat{(-\Delta)^r u})(\xi)=| \xi |^{2r}\hat{u}(\xi).
\end{align*}

Problem \eqref{E-time-fractional eq}, with $\alpha=1$ and $\beta=1$, has been studied by Caffarelli and V\'{a}zquez in \cite{CV}. In particular, when $s=0$, we get the standard porous medium equation (see more details in \cite{A, Vq}). This model arises from considering the continuity equation of a nonnegative density distribution $u(x,t)$ according to Darcy’s law
\begin{align*}
\partial_tu+\text{div}(u\mathbf{v})=0,
\end{align*}
with $\mathbf{v}=\nabla \mathbf{p}$ is the potential, and $\mathbf{p}$ is the pressure. There are different relations between the density distribution $u$ and the pressure $\mathbf{p}$. For example, the model was proposed by Leibenzon and Muskat in 1930s, takes the relation in the form $\mathbf{p}=f(u)$, with $f$ is a nondecreasing scalar function. In this paper, we consider the case of nonlocal pressure that $\mathbf{p}=(-\Delta)^{-s}u$ with $0<s<\min\{1,d/2\}$. The equation \eqref{E-time-fractional eq} with $\alpha=1$ and the nonlocal pressure has been studied for the case $0<\beta<2$ \cite{STV-JDE} and for all $\beta >0$ \cite{NV, STV-ARMA}. In particular, in \cite{STV-JDE}, authors show that when $\beta\in [0,1)$ the problem \eqref{E-time-fractional eq} with $\alpha=1$, has infinite speed of propagation, and for $\beta\in [1,+\infty)$ it has finite speed of propagation. Problem \eqref{E-beta=0}, with $\alpha=1$, has been studied by Erbar in \cite{Erbar}, and Chung and Nguyen in \cite{CN} by two different methods.

In 1999 \cite{Caputo}, by modifying the Darcy's law, Caputo introduced and investigated the following equation 
\begin{align*}
\partial_t^\alpha u-\text{div} (\kappa (u)\nabla u)=f,
\end{align*}
with $\partial_t^\alpha u$ denotes the Caputo fractional derivative of order $\alpha$. It has many applications in other fields such as physics, engineering, viscoelasticity, porous media, ... \cite {KRY, LLW, MP}. Recent years, time-fractional diffusion equations has been studied and developed by many authors \cite{DN,DJ,KSVZ,KSZ}.

In \cite{ACV}, Allen, Caffarelli and Vasseur studied the following equation \begin{align}\label{E-time-fractional with f}
\partial_t^\alpha u-\textup{div}(u^{\beta}\nabla(-\Delta)^{-s}u) = f \mbox{ in } \R^d.
\end{align} 
They proved the existence of weak solutions for the equation \eqref{E-time-fractional with f} for the case $\beta=1$, $0<s<\frac{1}{2}$ when $f$ and the initial data have exponential decay at infinity. Later, Djida, Nieto and Area \cite{DNA} extended results in \cite{ACV} for the case $\beta\geq 1, 0<s<\frac{1}{2}$. In a recent paper \cite{DN}, Dao and Nguyen handled the equation \eqref{E-time-fractional with f} for the case $\beta\geq 1$, $\frac{1}{2}\leq s<1$ and $f=0$. In this article, we study the equation \eqref{E-time-fractional with f} for the case $0\leq \beta\leq 1$, $0<s< 1$ and $f=0$. Our approaches in solving \eqref{E-time-fractional with f} for this case are different from \cite{ACV, DN, DNA}.

To solve problems \eqref{E-time-fractional eq} and \eqref{E-beta=0}, we propose new JKO schemes on modified Wasserstein distances. After the works of Jordan, Kinderlehrer and Otto on their seminal paper \cite{JKO}, their JKO schemes via the usual Wassertein distance in the space of probability measures have become a powerful tool to study a numerous classes of PDEs \cite{AmGiSa, AMS, AS, BB, CV, CLSS, DS, Erbar, LMS, LMS12, Otto, OttoWest}. Specially, in the recent paper in 2020, Duong and Jin \cite{DJ} are the first ones applying a JKO scheme on the usual Wasserstein distance to solve a class of time-fractional PDE. On the other hand, the modified Wasserstein distance was introduced and investigated in \cite{DNS, CLSS}. It has been applied to studied in several classes of PDEs \cite{CN, LMS12, MMS}.

A function $u:[0,\infty)\to \cP(\R^d)$ is a weak solution of the equation
\begin{align*}
\partial_t^\alpha u=\textup{div}\left(u^{\beta}\nabla(-\Delta)^{-s}u\right)
\end{align*}  
if for every $T>0$ and every $\phi\in C^\infty \left([0,T]\times\R^d\right)$ satisfying $\phi(T)=0$ and $\phi(t,\cdot)\in C^\infty_c(\R^d)$ for every $t\in [0,T]$, we have $$\int_0^T\int_{\R^d}\left(\partial^\alpha_t\phi(t)u(t)+\nabla_x \phi(t)u^{\beta}\nabla(-\Delta)^{-s}u(t)\right) dxdt=\frac{1}{\Gamma(1-\alpha)}\int_{\R^d}u(0)\int_0^Tt^{-\alpha}\phi(t)dtdx.$$
Similarly, a function $u:[0,\infty)\to \cP(\R^d)$ is a weak solution of the equation \begin{align*}
\partial_t^\alpha u=-(-\Delta)^{1-s}u
\end{align*}  
if for every $T>0$ and $\phi\in C^\infty \left([0,T]\times\R^d\right)$ satisfying $\phi(T)=0$ and $\phi(t,\cdot)\in C^\infty_c(\R^d)$ for every $t\in [0,T]$, we have $$\int_0^T\int_{\R^d}\left(\partial^\alpha_t\phi(t)+(-\Delta)^{1-s}\phi (t)\right) u(t)dxdt=\frac{1}{\Gamma(1-\alpha)}\int_{\R^d}u(0)\int_0^Tt^{-\alpha}\phi(t)dtdx.$$

Now let us present our new JKO schemes. 
Let $\bm:[0,+\infty)\to (0,+\infty)$ be a map such that $\inf_{x\in [0,+\infty)} \bm(x)>0$ and $\bm''\leq 0$. Then we can define the modified Wasserstein distance 
$\bW^2_\bm$ on the space $\cP_2(\R^d)$ consisting of all Borel probability measures on $\R^d$ with finite second moments (see Section 2.1). 
Given $\tau>0$ and $u^0_\tau:=u_0\in \cP_2(\R^d)$, we define $u^k_\tau$ inductively for $k\in \N$ as follows
\begin{align}\label{JKO schemes}
u^{k}_\tau:=\argmin_{u\in \cP(\R^d)}\bigg\{\frac{C_\alpha}{2\tau^\alpha}\bW^2_\bm(u,\overline{u}^{k-1}_\tau)+\frac{1}{2}\|u\|^2_{\dot{H}^{-s}(\R^d)}\bigg\},
\end{align} 
where $C_\alpha:=\frac{1}{\Gamma(2-\alpha)}$, $\|\cdot\|_{\dot{H}^{-s}(\R^d)}$ is the norm of the homogeneous Sobolev space $\dot{H}^{-s}(\R^d)$, and $\overline{u}^{k-1}_\tau:=\sum_{i=0}^{k-1}\left(-b^{(k)}_{k-i}\right)u^i_\tau$ with $b^{(k)}_{i}$ is defined by \begin{align*}
b^{(k)}_{i}:=\left\{\begin{array}{ll}
1, & i=0,\\
(i+1)^{1-\alpha}+(i-1)^{1-\alpha}-2i^{1-\alpha},&i=1,\ldots,k-1,\\
(k-1)^{1-\alpha}-k^{1-\alpha}, & i=k.
\end{array}\right.
\end{align*} 

Then we apply our JKO schemes \eqref{JKO schemes} for $\bm(z)=\left(z+\tau^{\frac{\alpha}{4(2-\beta)}}\right)^{\beta}$ and $\bm(z)=(z+1)^{\tau^{1-\alpha/4}}$ to solve equation \eqref{E-time-fractional eq} and equation \eqref{E-beta=0}, respectively. The two main technical challenges of our JKO scheme is to deal with the nonlocality of the fractional derivative $\partial_t^\alpha u $ and the degeneracy of both equations \eqref{E-time-fractional eq} and \eqref{E-beta=0}. To overcome the first one we adopt the piecewise linear approximation, known as the L1 approximation \cite{LX}, which was used before in \cite{DJ} to solve the time-fractional Fokker-Planck equation. As our equations are degenerated we can not use the usual Wasserstein distance in our scheme as \cite{DJ}. To overcome this issue, we employ our weight functions $\bm$ (thus the modified Wasserstein distance) depending on the time-step $\tau$. Incorporating the time-step into a transport cost functional allows us adapt known techniques \cite{JKO, LMS12, MMS, OttoWest} for the convergence analysis. Note that this idea was used before for solving several classes of PDEs \cite{CN, DPZ, H,LMS,PRST}. To our best knowledge, no one have used JKO schemes with modified Wasserstein distances to solve time-fractional PDEs before. Our main contributions of this article are gradient formulation of the scheme \eqref{JKO schemes} and its convergence analysis.

We now illustrate our main results in this paper. Our first main result is the following Theorem \ref{main result}, which we prove the existence of weak solutions of equation \eqref{E-time-fractional eq}. Furthermore, we also show the regularity estimate of our interpolation approximation.
\begin{theorem}\label{main result}
Let $u_0\in \cP_2(\R^d)\cap \dot{H}^{-s}(\R^d)$, $d\geq 2$, $0<\beta\leq 1$, $0<s<\min\{1,\frac{d}{2}\}$. For every $\tau>0$, let $\{{u^k_\tau}\}_{k\in \N}$ be the solution of scheme \eqref{JKO schemes} with $\bm(z)=\left(z+\tau^{\frac{\alpha}{4(2-\beta)}}\right)^{\beta}$. We define the interpolation function $\hat{u}_\tau:[0,\infty)\to \cP(\R^d)$ by \begin{align}
\label{appro. of u} \hat{u}_\tau(t):=u^k_\tau \mbox{ for every } (k-1)\tau<t\leq k\tau. \end{align}
Then there exists a function $u:[0,\infty)\to \cP(\R^d)$ such that 
\begin{enumerate}
\item for every $T>0$, $\hat{u}_{\tau} $ converges weakly to $u$ in $L^2((0,T);\dot{H}^{1-s}(\R^d))$ as $\tau\to 0$ ; 
\item and $u$ is a weak solution of the equation
\begin{align*}
\partial_t^\alpha u=\textup{div}\left(u^{\beta}\nabla(-\Delta)^{-s}u\right).
\end{align*}  
\end{enumerate}

3. On the other hand, there exists $K_1>0$ such that for every $t\geq 0$ and $1\leq q<p$, we have \begin{align}\label{F-regularity}
\| \hat{u}_\tau(t)\|_{L^p(\R^d)}\leq \min\bigg\{\|u_0\|_{L^p(\R^d)}, K_1\bigg(\frac{(\beta+p)^2}{4p(p-1)}\tau^{-\alpha}\|u_0\|^{\eta_2}_{L^q(\R^d)}\|u_0\|_{L^p(\R^d)}^{p}\bigg)^{\frac{1}{\eta_1}} \bigg\},
\end{align}
where \begin{align*}
\eta_1: = \frac{\frac{\beta +p}{q}-1+\frac{2(1-s)}{d}}{\frac{1}{q}-\frac{1}{p}}, \;\eta_2 := \frac{\frac{\beta }{p}+\frac{2(1-s)}{d}}{\frac{1}{q}-\frac{1}{p}}.
\end{align*}

\end{theorem}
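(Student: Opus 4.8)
The plan is to prove Theorem \ref{main result} via the standard JKO machinery adapted to the modified Wasserstein distance and the fractional time derivative, treating the three conclusions in turn. For part (1) and (2), I would first establish the basic a priori estimates coming directly from the minimality in scheme \eqref{JKO schemes}: testing with the competitor $u = \overline{u}^{k-1}_\tau$ (or rather comparing $u^k_\tau$ with the previous iterates) yields a discrete energy--dissipation inequality. Because of the coefficients $b^{(k)}_i$ arising from the L1 approximation of $\partial_t^\alpha$, summing these inequalities requires the known positivity/monotonicity properties of $b^{(k)}_i$ (these are $\le 0$ for $i\ge 1$, and the partial sums telescope appropriately); this is exactly the algebraic structure exploited in \cite{DJ, LX}. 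From this I extract a uniform bound on $\sum_k \tau \|u^k_\tau\|^2_{\dot H^{1-s}}$, i.e. a uniform bound on $\hat u_\tau$ in $L^2((0,T);\dot H^{1-s}(\R^d))$, giving weak compactness and a limit $u$. I would also need to pass $\bm$ to its $\tau\to 0$ limit: since $\bm(z) = (z+\tau^{\alpha/4(2-\beta)})^\beta \to z^\beta$, the modified Wasserstein terms converge to the degenerate transport structure associated with $u^\beta$; the time-step regularization $\tau^{\alpha/4(2-\beta)}$ is what keeps $\bm$ bounded below for each fixed $\tau$ so that the scheme is well posed, while vanishing in the limit.

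For the weak-formulation identity in (2), I would derive the discrete Euler--Lagrange equation for \eqref{JKO schemes}: perturbing $u^k_\tau$ along a smooth vector field (push-forward by the flow of $\zeta\in C_c^\infty$) and differentiating the functional gives, via the first variation of $\bW^2_\bm$ (which produces the $\bm(u)$-weighted velocity, cf. the Benamou--Brenier / \cite{DNS,CLSS} characterization) and the first variation of $\tfrac12\|\cdot\|^2_{\dot H^{-s}}$ (which produces $(-\Delta)^{-s}u$), a discrete weak identity. Multiplying by a test function $\phi$, summing in $k$ with the L1 weights, and recognizing the resulting discrete sum as a Riemann-sum approximation of $\partial_t^\alpha\phi$ against $u$ — this is the step where the choice of $b^{(k)}_i$ pays off, since $\sum_i b^{(k)}_{k-i}$ applied to a smooth function converges to $\Gamma(2-\alpha)\,\partial_t^\alpha$ evaluated at $t=k\tau$ up to the boundary term $t^{-\alpha}\phi$ — yields the claimed weak formulation after letting $\tau\to 0$ and using the weak $L^2((0,T);\dot H^{1-s})$ convergence together with strong convergence in a weaker topology (needed to pass to the limit in the nonlinear product $u^\beta\nabla(-\Delta)^{-s}u$). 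The main obstacle is precisely this passage to the limit in the nonlinear flux: one needs enough compactness (an Aubin--Lions type argument, using the $\dot H^{1-s}$ bound in space and an equicontinuity-in-time estimate derived from the scheme) to upgrade weak to strong convergence of $\hat u_\tau$ in, say, $L^2_{loc}$, so that $\hat u_\tau^\beta \to u^\beta$ a.e. and $\nabla(-\Delta)^{-s}\hat u_\tau \rightharpoonup \nabla(-\Delta)^{-s}u$ can be paired.

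For part (3), the $L^p$ estimate, I would use the flow-interchange technique of \cite{MMS} (in the modified-Wasserstein form of \cite[Lemma 3.3]{CN}): one evaluates the decay of the auxiliary entropy functional $u\mapsto \tfrac1{p-1}\int u^p$ along the gradient flow of an appropriate functional and uses it as a perturbation in \eqref{JKO schemes}. This produces, at each step, a differential inequality of the form $\tfrac{d}{dt}\|u^k_\tau\|_{L^p}^p \lesssim -(\text{Gagliardo--Nirenberg dissipation})$, where the dissipation controls a higher $L^{p}$-norm of $u^{(\beta+p)/2}$ in $\dot H^{1-s}$; interpolating via the fractional Gagliardo--Nirenberg--Sobolev inequality between $L^q$ and this $\dot H^{1-s}$ control gives the algebraic exponents $\eta_1,\eta_2$ in \eqref{F-regularity}, with the prefactor $\tfrac{(\beta+p)^2}{4p(p-1)}$ coming from the chain rule $|\nabla u^{(\beta+p)/2}|^2 = \big(\tfrac{\beta+p}{2}\big)^2 u^{\beta+p-2}|\nabla u|^2$ relating the dissipation to $u^\beta|\nabla u|^2$. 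The bound $\|\hat u_\tau(t)\|_{L^p}\le\|u_0\|_{L^p}$ follows from monotonicity of the $L^p$ norm along the scheme (each JKO step is $L^p$-nonincreasing, again by flow interchange with the $L^p$-entropy, whose gradient flow is a fast/porous-medium-type diffusion that decreases all $L^p$ norms). I expect the technical heart of part (3) to be making the flow-interchange estimate rigorous with the $\tau$-dependent weight $\bm$ and bookkeeping the L1 coefficients so that the single-step gain survives summation; the Gagliardo--Nirenberg interpolation itself is routine once the dissipation term is correctly identified.
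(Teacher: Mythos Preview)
Your overall strategy is reasonable, but it diverges from the paper's proof in two substantive ways, and one of them is a genuine gap.

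\textbf{Part (1): where the $\dot H^{1-s}$ bound actually comes from.} Testing the scheme with the competitor $\overline{u}^{k-1}_\tau$ gives you control of $\bW^2_\bm(u^k_\tau,\overline{u}^{k-1}_\tau)$ in terms of the drop $\|\overline{u}^{k-1}_\tau\|^2_{\dot H^{-s}}-\|u^k_\tau\|^2_{\dot H^{-s}}$; it does \emph{not} produce any $\dot H^{1-s}$ dissipation. The paper obtains the bound $\|u^k_\tau\|^2_{\dot H^{1-s}}\le \tfrac{C_\alpha}{\tau^\alpha}(\mathbf{U}(\overline{u}^{k-1}_\tau)-\mathbf{U}(u^k_\tau))$ by a \emph{flow interchange with the heat semigroup}: one perturbs $u^k_\tau$ by $\mathbf{H}_h u^k_\tau$, computes $\partial_h|_{h=0}\|\mathbf{H}_h u^k_\tau\|^2_{\dot H^{-s}}=-2\|u^k_\tau\|^2_{\dot H^{1-s}}$, and bounds the Wasserstein variation by the auxiliary entropy $\mathbf{U}$ (Lemma~\ref{Eulerian calculus}(\ref{F-convergence of solution W_m})). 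The summability then comes from the convexity estimate of Lemma~\ref{L-estimate of u bar}. Without this interchange step your Part (1) does not close.

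\textbf{Part (2): flow interchange versus push-forward Euler--Lagrange.} You propose to derive the discrete Euler--Lagrange equation by perturbing $u^k_\tau$ along the flow of a smooth vector field and differentiating $\bW^2_\bm$. For the modified distance $\bW_\bm$ this first variation is delicate: geodesics are not McCann interpolations, and there is no clean push-forward formula. The paper avoids this entirely by a second flow interchange, now with the semigroup $\mathbf{S}_\delta$ of $\partial_t v=\operatorname{div}(\bm(v)\nabla\varphi)+\delta\Delta v$ (Lemma~\ref{L-flows of modified equations}). This yields a one-sided inequality with an explicit error term governed by $\lambda_\delta$; after summing, applying Lemma~\ref{L-estimate of u bar} to the $\dot H^{-s}$ and $\mathbf{U}$ differences, and using Lemma~\ref{L-estimate of solution} to replace $(\hat u_\tau+\tau^{\frac{\alpha}{4(2-\beta)}})^\beta$ by $\hat u_\tau^\beta$, one passes to the limit and then obtains the reverse inequality by changing the sign of the test function. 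Your approach might be made to work, but it is not the paper's, and the compactness issue you flag (passing to the limit in $\hat u_\tau^\beta\nabla(-\Delta)^{-s}\hat u_\tau$) is indeed the sticking point---the paper is terse here and relies only on the weak $L^2((0,T);\dot H^{1-s})$ convergence, which as you note is not obviously sufficient for the product.

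\textbf{Part (3):} Your outline matches the paper's argument closely (Lemma~\ref{L-estimate solution} with $g(z)=z^p$, the lower bound $\mathcal{G}(z)\ge \tfrac{2\sqrt{p(p-1)}}{\beta+p}z^{(\beta+p)/2}$, Sobolev embedding, and interpolation between $L^q$ and $L^{\frac{(\beta+p)d}{d-2(1-s)}}$). One detail: the monotonicity $\|u^k_\tau\|_{L^p}\le\|u_0\|_{L^p}$ is proved by \emph{induction on $k$} using $\|u^k_\tau\|_{L^p}^p\le\|\overline{u}^{k-1}_\tau\|_{L^p}^p$ together with the convex-combination structure of $\overline{u}^{k-1}_\tau$, not by a separate flow-interchange step.
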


In our second main result, we get similar results for equation \eqref{E-beta=0}.
\begin{theorem}\label{main result2}
Let $u_0\in \cP_2(\R^d)\cap \dot{H}^{-s}$, $0<\beta\leq 1$, $0<s<\min\{1,\frac{d}{2}\}$. For every $\tau>0$, let $\{{u^k_\tau}\}_{k\in \N}$ be the solution of scheme \eqref{JKO schemes} with $\bm(z)=(z+1)^{\tau^{1-\alpha/4}}$. 
Then there exists a function $u:[0,\infty)\to \cP(\R^d)$ such that 
\begin{enumerate}
\item for every $T>0$, $\hat{u}_{\tau} $ converges weakly to $u$ in $L^2((0,T);\dot{H}^{1-s}(\R^d))$ as $\tau\to 0$ ; 
\item and $u$ is a weak solution of the equation
\begin{align*}
\partial_t^\alpha u=-(-\Delta)^{1-s}u.
\end{align*}  
\end{enumerate}
3. On the other hand, there exists $K_2>0$ such that for every $\tau \leq 1$, $t\geq 0$ and $1\leq q<p$, we have 
\begin{align}\label{F-regularity-beta=0}
\| \hat{u}_\tau(t)\|_{L^p(\R^d)}\leq \min\bigg\{\|u_0\|_{L^p(\R^d)}, K_2\bigg(\frac{(p+1)^2}{4p(p-1)}\tau^{-\alpha}\|u_0\|^{\theta_2}_{L^q(\R^d)}\|u_0\|_{L^p(\R^d)}^{p}\bigg)^{\frac{1}{\theta_1}} \bigg\},
\end{align}
where 
 \begin{align*}
\theta_1: = \frac{\frac{\tau^{1-\alpha/4} +p}{q}-1+\frac{2(1-s)}{d}}{\frac{1}{q}-\frac{1}{p}}, \;\theta_2 := \frac{\frac{\tau^{1-\alpha/4} }{p}+\frac{2(1-s)}{d}}{\frac{1}{q}-\frac{1}{p}}.
\end{align*}

\end{theorem}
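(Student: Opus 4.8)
The plan is to mirror the proof of Theorem \ref{main result}, exploiting that the weight $\bm(z)=(z+1)^{\tau^{1-\alpha/4}}$ is a rescaled version of the logarithm-type entropy rather than a power. First I would check that $\bm$ satisfies the standing hypotheses: $\inf_{[0,\infty)}\bm \ge 1>0$ and $\bm''(z)=\tau^{1-\alpha/4}(\tau^{1-\alpha/4}-1)(z+1)^{\tau^{1-\alpha/4}-2}\le 0$ for $\tau\le 1$, so $\bW^2_\bm$ is a well-defined modified Wasserstein distance and the scheme \eqref{JKO schemes} admits minimizers $u^k_\tau$ by the direct method (lower semicontinuity of $\bW^2_\bm$ and of $\|\cdot\|^2_{\dot H^{-s}}$ under narrow convergence plus tightness of sublevel sets). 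Next I would derive the Euler--Lagrange equation for \eqref{JKO schemes}: perturbing $u^k_\tau$ along a smooth vector field, the first variation of the transport term produces $\frac{C_\alpha}{\tau^\alpha}$ times a velocity field coupled to $\bm$, and the first variation of $\frac12\|u\|^2_{\dot H^{-s}}$ produces $(-\Delta)^{-s}u^k_\tau$; together they give the discrete identity
\begin{align*}
\sum_{i=0}^{k-1}\bigl(-b^{(k)}_{k-i}\bigr)\,\frac{C_\alpha}{\tau^\alpha}\,\langle \nabla\phi,\,\cdot\,\rangle
= \int_{\R^d}\nabla\phi\cdot\nabla(-\Delta)^{-s}u^k_\tau\,\bm(\text{density})\,dx,
\end{align*}
which, because $\bm(z)=(z+1)^{\tau^{1-\alpha/4}}\to 1$ as $\tau\to 0$, is the discrete analogue of $\partial_t^\alpha u=\dive(\nabla(-\Delta)^{-s}u)=-(-\Delta)^{1-s}u$. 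Here the combinatorial weights $b^{(k)}_i$ encode the L1 discretization of the Caputo derivative exactly as in \cite{DJ, LX}.

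The core estimates are two. First, a flow-interchange/energy bound: testing the variational inequality against the competitor $\overline u^{k-1}_\tau$ and summing the resulting telescoping inequality over $k$ — using the positivity properties of the kernel $(b^{(k)}_i)$ that make $\sum_k$ of the Caputo-discretized increments controllable — yields $\sum_k \tau\,\|u^k_\tau\|^2_{\dot H^{1-s}}\le C\|u_0\|^2_{\dot H^{-s}}$ uniformly in $\tau$, hence $\hat u_\tau$ is bounded in $L^2((0,T);\dot H^{1-s})$. This gives a weakly convergent subsequence $\hat u_\tau\rightharpoonup u$, proving (1). Second, passing to the limit in the weak formulation: I would write the discrete Euler--Lagrange identities against a test function $\phi\in C^\infty([0,T]\times\R^d)$ with $\phi(T)=0$, use summation by parts in time against $b^{(k)}_i$ (which reconstructs $\partial_t^\alpha\phi$ together with the boundary term $\frac{1}{\Gamma(1-\alpha)}\int u_0\int_0^T t^{-\alpha}\phi\,dt$, as in \cite{DJ}), and use the $\dot H^{1-s}$ bound together with $\bm(\cdot)\to 1$ and the boundedness of $(-\Delta)^{-s}$-type operators to pass to the limit in the flux term; since for equation \eqref{E-beta=0} the flux is linear in $u$ this limit passage is genuinely easier than in Theorem \ref{main result}, requiring only weak convergence rather than strong $L^p_{loc}$ compactness. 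This proves (2).

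For part (3), the $L^p$ estimate, I would run the same Moser-type iteration used for \eqref{F-regularity}. Testing the Euler--Lagrange equation with $p\,u^{p-1}$ (formally) gives, after using $\dive(\bm(u)\nabla(-\Delta)^{-s}u)$ and the Stroock--Varopoulos / fractional integration-by-parts inequality, a differential-in-$k$ inequality of the form
\begin{align*}
\|u^k_\tau\|^p_{L^p}-\|\overline u^{k-1}_\tau\|^p_{L^p}
\le -\,c\,\frac{\tau^\alpha}{C_\alpha}\,\frac{4p(p-1)}{(p+1)^2}\,\bigl\|u^{(p+\tau^{1-\alpha/4})/2}_\tau\bigr\|^2_{\dot H^{1-s}}+\cdots,
\end{align*}
where the exponent $\tau^{1-\alpha/4}$ appears in place of $\beta$ because $\bm(z)\sim z^{\tau^{1-\alpha/4}}$ near the relevant scale; convexity of $z\mapsto z^p$ controls $\|\overline u^{k-1}_\tau\|_{L^p}$ by the $u^i_\tau$. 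Combining with the Gagliardo--Nirenberg--Sobolev inequality for $\dot H^{1-s}$ (interpolating $L^p$ between $L^q$ and $\dot H^{1-s}$, valid since $0<s<\min\{1,d/2\}$) and the a priori conservation $\|u^k_\tau\|_{L^q}\le\|u_0\|_{L^q}$ yields precisely \eqref{F-regularity-beta=0} with the stated $\theta_1,\theta_2$ (which are the $\beta\mapsto\tau^{1-\alpha/4}$ specializations of $\eta_1,\eta_2$), while the alternative bound $\|u_0\|_{L^p}$ follows from the $L^p$-non-increasing property of each step. I expect the main obstacle to be making the Moser iteration rigorous at the discrete level in the presence of the convex-combination term $\overline u^{k-1}_\tau$: one must absorb the cross terms produced by the kernel $(b^{(k)}_i)$ and verify that the constant $K_2$ can be chosen independent of $\tau\le 1$ despite the $\tau$-dependent exponent in $\bm$ — this is exactly where keeping $\tau^{1-\alpha/4}\le 1$ and $\bm\ge 1$ is used to keep all constants uniform.
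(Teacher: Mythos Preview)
Your high-level plan---mirror Theorem~\ref{main result} with $\beta$ replaced by $\tau^{1-\alpha/4}$---is correct, and part (3) is essentially right in spirit (though the paper does no Moser iteration: it is a single application of the flow-interchange Lemma~\ref{L-estimate solution} with $g(z)=z^p$, one Sobolev embedding, one interpolation, and the induction $\|u^k_\tau\|_{L^p}\le\|u_0\|_{L^p}$ via convexity of $z^p$ and $\sum_i(-b^{(k)}_{k-i})=1$). But the mechanisms you propose for (1) and (2) are not the ones that work.

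For (1), testing the minimality of $u^k_\tau$ against the competitor $\overline u^{k-1}_\tau$ only gives the basic energy inequality $\frac{C_\alpha}{2\tau^\alpha}\bW^2_\bm(u^k_\tau,\overline u^{k-1}_\tau)+\tfrac12\|u^k_\tau\|^2_{\dot H^{-s}}\le \tfrac12\|\overline u^{k-1}_\tau\|^2_{\dot H^{-s}}$; this controls $\bW_\bm$ and $\dot H^{-s}$ but says nothing about $\dot H^{1-s}$. The paper instead runs a flow interchange with the \emph{heat} semigroup $\mathbf H_h$ (Lemma~\ref{L-convergence of u}): comparing $u^k_\tau$ with $\mathbf H_h u^k_\tau$ and invoking Lemma~\ref{Eulerian calculus}(\ref{F-convergence of solution W_m}) yields $\|u^k_\tau\|^2_{\dot H^{1-s}}\le \frac{C_\alpha}{\tau^\alpha}\big(\mathbf U(\overline u^{k-1}_\tau)-\mathbf U(u^k_\tau)\big)$, where $\mathbf U$ is the entropy with $U''=1/\bm$. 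Summing in $k$ is \emph{not} telescopic because of the convex-combination kernel; the crucial step is Lemma~\ref{L-estimate of u bar}, which uses convexity of $\mathbf U$ and the explicit $b^{(k)}_i$ to get $\sum_{k=1}^N\big(\mathbf U(\overline u^{k-1}_\tau)-\mathbf U(u^k_\tau)\big)\le (T/\tau)^{1-\alpha}\mathbf U(u_0)$. The bound is therefore $\int_0^T\|\hat u_\tau\|^2_{\dot H^{1-s}}\le C_\alpha T^{1-\alpha}\mathbf U(u_0)$, not $C\|u_0\|^2_{\dot H^{-s}}$.

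For (2), the paper does not derive an Euler--Lagrange identity by vector-field perturbation. It uses a second flow interchange, now along the regularized transport semigroup $\mathbf S_{\delta,h}$ solving $\partial_t v=\dive(\bm(v)\nabla\varphi)+\delta\Delta v$ (Lemma~\ref{L-flows of modified equations}), which produces an \emph{inequality} with error $-\frac{\lambda_\delta C_\alpha}{2\tau^\alpha}\bW^2_\bm(u^k_\tau,\overline u^{k-1}_\tau)$. The specific choice $\delta=\tau^{\alpha/4}$ balances the two terms in \eqref{formula of delta} for $\bm(z)=(z+1)^{\tau^{1-\alpha/4}}$: one computes $-\lambda_\delta\le C(\varphi)\,\tau^{1-\alpha/4}(1+\delta^{-1})$, and together with $\sum_k B_k\le \tfrac12(T/\tau)^{1-\alpha}\|u_0\|^2_{\dot H^{-s}}$ (again Lemma~\ref{L-estimate of u bar}) the error vanishes as $\tau\to 0$. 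The nonlinear drift $\dive\big((\hat u_\tau+1)^{\tau^{1-\alpha/4}}\nabla(-\Delta)^{-s}\hat u_\tau\big)$ is then compared to $-(-\Delta)^{1-s}\hat u_\tau$ via Lemma~\ref{L-estimate of solution}(2), using $0\le (a+1)^r-1\le r(a+1)^r\log(a+1)$ and $\log(r+1)\le 2r^{1/3}$; this is where ``$\bm\to 1$'' is made quantitative, and it does not follow from weak convergence alone as your ``the flux is linear'' remark suggests. Finally, since one only gets an inequality from the flow interchange, the equality in the weak formulation is obtained by repeating the argument with $\varphi$ replaced by $-\varphi$.
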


Our paper is organized as follows. In section 2, we review  the fractional calculus, fractional Sobolev spaces and the definition of modified Wasserstein distance. In the next section, we show the existence and uniqueness for solutions of our JKO scheme, and we also establish the framework for gradient flows in the modified Wasserstein space. We give the proof of our main results Theorems \ref{main result} and \ref{main result2} in section 4. 
\section{Preliminaries}
\subsection{Modified Wasserstein distance}
In this subsection, we review the modified Wasserstein distance which was introduced and investigated in \cite{DNS, CLSS}.

First, we recall the definition of the Wasserstein distance \cite{V}. In this paper, we consider probability measures on $\R^d$ that are absolutely continuous with respect to Lebesgue measure and identify a probability measure with its density. We denote by $\cP(\R^d)$ the set of all probability Borel measures on $\R^d$. The set $\cP_2(\R^d)$ is defined as the set of all measure $\mu\in \cP(\R^d)$ with finite second moment, i.e., \begin{align*}
\cP_2(\R^d):=\bigg\{\mu: \R^d\to [0,+\infty)\text{ is measurable}: \int_{\R^d}\mu(x)dx=1\text{ and } \int_{\R^d}\vert x\vert^2d\mu(x)<\infty\bigg\}.
\end{align*}
For every $\gamma_1,\gamma_2\in \cP_2(\R^d)$, we denote by $\Pi(\gamma_1,\gamma_2)$ the set of all probability Borel measure $\pi$ on $\R^d\times \R^d$ such that the first and the second marginal of $\pi$ are $\gamma_1$ and $\gamma_2$, respectively, i.e., for every Borel subset $A$ of $\R^d$, we have\begin{align*}
\pi(A\times \R^d)=\gamma_1(A) \text{ and }\pi(\R^d\times A)=\gamma_2(A).
\end{align*}
Then, the Wasserstein distance between $\nu_1$ and $\nu_2$ is defined by \begin{align*}
W_2(\gamma_1,\gamma_2):=\bigg(\inf_{\pi\in \Pi(\gamma_1,\gamma_2)}\int_{\R^d\times \R^d}\vert x-y\vert^2d\pi(x,y)\bigg)^{1/2}.
\end{align*}
In \cite{BB}, by considering the continuity equation $\partial_t\rho_t+\text{div}(\rho_tv_t)=0$, Benamou and Brenier give the equivalence definition for Wasserstein distance as follows.
\begin{align*}
W_2(\gamma_1,\gamma_2 )=\inf\bigg\{\int_0^t\int_{\R^d}\rho_t(x)|v_t(x)|^2dxdt: \partial_t\rho_t+\text{div}(\rho_tv_t)=0, \gamma_1=\rho_0\mathcal{L}^d, \gamma_2=\rho_1\mathcal{L}^d\bigg\}^{1/2},
\end{align*}
where $\mathcal{L}^d$ is the Lebesgue measure of $\R^d$.

Now, we present the definition of the modified Wasserstein distance. We consider the following continuity equation \begin{align}\label{CE}
\partial_t\mu_t+\nabla\cdot \nu_t=0 \text{ in }\R^d\times (0,1),
\end{align}
where families measures $(\mu_t)_{t\in [0,1]}$ in the space $\cM^+_{\text{loc}}(\R^d)$ of nonnegative Radon measures in $\R^d$, and $(\nu_t)_{t\in [0,1]}$ in the space $\cM_{\text{loc}}(\R^d, \R^d)$ of $\R^d$-valued Radon measures in $\R^d$. Then, we denote by $\mathcal{CE}$ the set of $((\mu_t)_{t\in [0,1]},(\nu_s)_{s\in [0,1]})\subset \cM^+_{\text{loc}}(\R^d)\times \cM_{\text{loc}}(\R^d, \R^d)$ satisfying the following three conditions.
\begin{enumerate}[(i)]
	\item $t\mapsto \mu_t$ is weakly* continuous in $\cM^+_{\text{loc}}(\R^d)$,
	\item $(\nu_t)_{t\in [0,1]}$ is a Borel family and $\int_0^1\vert \nu_t\vert (\R^d)dt<\infty$,
	\item $(\mu_t,\nu_t)_{t\in [0,1]}$ is a solution of \eqref{CE} in the sense of distributions, i.e., \begin{align*}
\int_0^1\int_{\R^d}\partial_t\varphi (x,t)d\mu_t(x)dt+\int_0^1\int_{\R^d}\nabla_x\varphi (x,t)\cdot d\nu_t(x)dt=0,\text{ for every }\varphi\in C_c^1(\R^d\times (0,1)).
\end{align*}
\end{enumerate}
For measures $\gamma^0,\gamma^1\in \cM^+_{\text{loc}}(\R^d)$, $\mathcal{CE}(\gamma^0\to \gamma^1)$ denotes the subset of $\mathcal{CE}$ such that $\mu_0=\gamma^0$ and $\mu_1=\gamma^1$.\\
Let $\mathbf{m}: [0,\infty)\to (0,\infty)$ be a concave and nondecreasing function, we define the action density function $\zeta:[0,\infty)\times \R^d\to [0,\infty)$ by \begin{align}\label{density func}
\zeta(r,s):=\frac{\vert s\vert^2}{\bm(r)}, \text{ for every } (r,s)\in [0,\infty)\times \R^d.
\end{align}
Then for every $\gamma^0,\gamma^1\in \cM^+_{\text{loc}}(\R^d)$, we define 
\begin{align*}
\bW_\bm(\gamma^0,\gamma^1):=\inf_{(\mu,\nu)\in\mathcal{CE}(\gamma^0\to \gamma^1)}\bigg(\int_0^1\mathcal{S} (\mu_t,\nu_t)dt\bigg)^{1/2},
\end{align*}  
if $\mathcal{CE}(\gamma^0\to \gamma^1)$ is nonempty, and $\bW_\bm(\gamma^0,\gamma^1)=+\infty$, otherwise. Here, the function $\mathcal{S}(\mu,\nu)$ is defined by \begin{align*}
\mathcal{S}(\mu,\nu):=\begin{cases}\int_{\R^d}\zeta(\sigma,\varrho)dx &\text{ if }\nu^\perp=0,\\ +\infty &\text{ otherwise,} \end{cases}
\end{align*}
where $\mu=\sigma\mathcal{L}^d+\mu^\perp$ and $\nu=\varrho\mathcal{L}^d+\nu^\perp$ are the Lebesgue decompositions with $\mathcal{L}^d$ is the Lebesgue measure of $\R^d$.\\
If $\mathcal{CE}(\gamma^0\to \gamma^1)$ is nonempty, applying \cite[Theorem 5.4]{DNS}, we also have that
\begin{align*}
\bW_\bm(\gamma^0,\gamma^1):=\inf_{(\mu,\nu)\in\mathcal{CE}(\gamma^0\to \gamma^1)}\int_0^1\mathcal{S} (\mu_t,\nu_t)^{1/2}dt.
\end{align*}  
From our assumptions on the function $\bm$, applying \cite[Theorem 3.1 and 3.3]{DNS} we have that our action density function $\zeta$ satisfies conditions (3.1a), (3.1b) and (3.1c) in \cite{DNS}. Therefore $\bW_\bm$ is a modified Wasserstein (pseudo) distance on $\cM^+_{\text{loc}}(\R^d)$ in the sense of \cite[Definition 5.1]{DNS}.

For our JKO scheme \eqref{JKO schemes}, given $\tau>0$, we use weight functions $\bm(z)=\left(z+\tau^{\frac{\alpha}{4(2-\beta)}}\right)^{\beta}$ and $\bm(z)=(z+1)^{\tau^{1-\alpha/4}}$ to solve equations \eqref{E-time-fractional eq} and \eqref{E-beta=0}, respectively. As $0<\alpha<1, 0<\beta\leq 1$ then for every $0<\tau<1$, our weight functions $\bm(z)=\left(z+\tau^{\frac{\alpha}{4(2-\beta)}}\right)^{\beta}$ and $\bm(z)=(z+1)^{\tau^{1-\alpha/4}}$ are concave and nondecreasing.
\subsection{Fractional Sobolev spaces}
First, we recall the definition of the Caputo derivative (for more details, readers can see \cite{KST}). Let $\alpha\in (0,1)$ and $t_l,t_r\in\R$ such that $t_l<t_r$. We define the left-sided and right-sided Caputo derivative of order $\alpha$ of a function $\varphi: (t_l,t_r)\to\R$, respectively by 
\begin{align*}
{}_{t_l}D^{\alpha}_t\varphi(t):=\frac{1}{\Gamma(1-\alpha)}\int_{t_l}^t(t-s)^{-\alpha}\varphi '(s)ds\text{ and }{}_{t}D^{\alpha}_{t_r}\varphi(t):=\frac{1}{\Gamma(1-\alpha)}\int_{t}^{t_r}(s-t)^{-\alpha}\varphi '(s)ds.
\end{align*}
For simplicity, we write $\partial^\alpha_t \varphi$ instead of ${}_0D^\alpha_t\varphi$.

Let us now review the fractional Sobolev spaces \cite{BCD}. For $\varphi \in L^1(\R^d)$, the Fourier transform of $\varphi$ is defined as \begin{align*}
\hat{\varphi}(\xi):=\int_{\R^d}e^{-ix\cdot \xi}\varphi(x)dx.
\end{align*}
$\mathcal{S}(\R^d)$ denotes the Schwartz space of smooth function on $\R^d$ with rapid decay at infinity and $\mathcal{S}'(\R^d)$ denotes its dual space. Let $r\in \R$, for every $\varphi\in \mathcal{S}'(\R^d)$ such that $\varphi \in L^1_{\text{loc}}(\R^d)$, we define
\begin{align*}
&\|\varphi\|^2_{H^r(\R^d)}:=\frac{1}{(2\pi)^d}\int_{\R^d}(1+\vert \xi\vert^2)^r\vert \hat{\varphi}(\xi)\vert^2d\xi,\\
&\|\varphi\|^2_{\dot{H}^r(\R^d)}:=\frac{1}{(2\pi)^d}\int_{\R^d}\vert \xi\vert^{2r}\vert \hat{\varphi}(\xi)\vert^2d\xi .
\end{align*}
Then, we define the fractional Sobolev space $H^r(\R^d)$ and the homogeneous fractional Sobolev space $\dot{H}^r(\R^d)$ respectively by 
\begin{align*}
H^r(\R^d)&:=\{\varphi\in \mathcal{S}'(\R^d):  \varphi \in L^1_{\text{loc}}(\R^d), \|\varphi\|^2_{H^r(\R^d)}<+\infty\},\\
\dot{H}^r(\R^d)&:=\{\varphi\in \mathcal{S}'(\R^d):  \varphi \in L^1_{\text{loc}}(\R^d), \|\varphi\|^2_{\dot{H}^r(\R^d)}<+\infty\}.
\end{align*}
If $r<d/2$ then for every $u,v\in \dot{H}^r(\R^d)$, the scalar product $\langle u,v\rangle_r$ in $\dot{H}^r(\R^d)$ is defined by \begin{align*}
\langle u,v\rangle_r:=\frac{1}{(2\pi)^d}\int_{\R^d}|\xi|^{2r}\hat{u}(\xi)\overline{\hat{v}(\xi)}d\xi.
\end{align*}
Furthermore, for $r\in (0,1)$, applying \cite[Proposition 1.37]{BCD} there exists $C_{d,r}>0$ such that \begin{align}\label{scalar product}
\langle u,v\rangle_r=C_{d,r}\int_{R^d}\int_{\R^d}(u(x)-u(y))(v(x)-v(y))|x-y|^{-d-2r}dxdy, \text{ for all }u,v\in  \dot{H}^r(\R^d).
\end{align}
If $r\in (0,d/2)$ then by \cite[Theorem 1.38]{BCD} there exists $S_{d,r}>0$ such that the fractional Sobolev inequality as\begin{align}\label{Sobolev ineq}
\|u\|_{L^q(\R^d)}\leq S_{d,r}\|u\|_{\dot{H}^r(\R^d)},
\end{align}
for every $u\in \dot{H}^r(\R^d)$ and here $q:=\frac{2d}{d-2r}$.

\section{JKO schemes and gradient flows in the modified Wasserstein space}
In this section, let $\mathbf{m}\in C^{\infty}(\R_+,\R_+)$ such that $\inf \bm>0$ and $\mathbf{m}''\leq 0$. We show the uniqueness of solutions of our JKO scheme and investigate gradient formulation of them.
\begin{theorem}
Let $u_0\in \cP_2(\R^d)\cap \dot{H}^{-s}(\R^d)$ then for every $\tau>0$, the scheme (\ref{JKO schemes}) has a unique solution.
\end{theorem}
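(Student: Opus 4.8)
The plan is to argue by induction on $k$, showing that at every step the functional in \eqref{JKO schemes} admits a unique minimizer over $\cP(\R^d)$; throughout I would use the properties of $\bW_\bm$ recorded above (it is a modified Wasserstein distance in the sense of \cite{DNS}, hence narrowly lower semicontinuous with narrowly precompact bounded sets) and the joint convexity of the action density $\zeta(r,s)=|s|^2/\bm(r)$, which holds precisely because $\bm''\leq 0$. \emph{First}, assuming $u^0_\tau,\dots,u^{k-1}_\tau\in\cP(\R^d)\cap\dot H^{-s}(\R^d)$, I would verify that $\overline{u}^{k-1}_\tau=\sum_{i=0}^{k-1}\lambda^{(k)}_i u^i_\tau$, with $\lambda^{(k)}_i:=-b^{(k)}_{k-i}$, is a convex combination of probability measures: as $i$ runs over $\{0,\dots,k-1\}$ the index $k-i$ runs over $\{1,\dots,k\}$, and for $1\leq j\leq k-1$ the number $b^{(k)}_j$ is a second difference of the concave function $x\mapsto x^{1-\alpha}$ (concave since $1-\alpha\in(0,1)$), hence $\leq 0$, while $b^{(k)}_k=(k-1)^{1-\alpha}-k^{1-\alpha}<0$; and a telescoping computation with $f(x)=x^{1-\alpha}$ gives $\sum_{j=1}^{k}b^{(k)}_j=-1$, so $\sum_i\lambda^{(k)}_i=1$. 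Thus $\overline{u}^{k-1}_\tau\in\cP(\R^d)$ and, by convexity of the seminorm, $\|\overline{u}^{k-1}_\tau\|_{\dot H^{-s}}\leq\sum_i\lambda^{(k)}_i\|u^i_\tau\|_{\dot H^{-s}}<\infty$.

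\emph{Existence of the minimizer.} Fix $\nu:=\overline{u}^{k-1}_\tau$ and put $F(u):=\frac{C_\alpha}{2\tau^\alpha}\bW_\bm^2(u,\nu)+\frac12\|u\|^2_{\dot H^{-s}(\R^d)}$ on $\cP(\R^d)$. Since $F\geq 0$ and $F(\nu)=\frac12\|\nu\|^2_{\dot H^{-s}}<\infty$, the value $m:=\inf_{\cP(\R^d)}F$ is finite. Take a minimizing sequence $(u_n)$ with $F(u_n)\leq m+1$. From $\|u_n\|^2_{\dot H^{-s}}\leq 2(m+1)$ we get, along a subsequence, $u_n\rightharpoonup u$ weakly in $\dot H^{-s}(\R^d)$; from $\bW_\bm^2(u_n,\nu)\leq\frac{2\tau^\alpha}{C_\alpha}(m+1)$ and the compactness of $\bW_\bm$-balls in \cite{DNS}, we may assume in addition $u_n\to u$ narrowly with $u\in\cP(\R^d)$. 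Weak lower semicontinuity of the Hilbert norm and narrow lower semicontinuity of $\bW_\bm$ give $F(u)\leq\liminf_n F(u_n)=m$, so $u^k_\tau:=u$ is a minimizer and lies in $\cP(\R^d)\cap\dot H^{-s}(\R^d)$ because $F(u^k_\tau)<\infty$; this closes the induction.

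\emph{Uniqueness.} It suffices to show $F$ is strictly convex on $\cP(\R^d)$. The map $u\mapsto\bW_\bm^2(u,\nu)$ is convex: linearity of the continuity equation \eqref{CE} in $(\mu_t,\nu_t)$ shows that convex combinations of competitors (with the common second endpoint $\nu$) are again competitors, and joint convexity of $\zeta$ makes the action functional convex along them. The map $u\mapsto\|u\|^2_{\dot H^{-s}}$ is strictly convex, since for $-s<d/2$ the quantity $\|\cdot\|_{\dot H^{-s}}$ is a genuine norm (if $\|u\|_{\dot H^{-s}}=0$ then $|\xi|^{-s}\hat u=0$ a.e., hence $u=0$) and the parallelogram law gives $\|\tfrac{u_0+u_1}{2}\|^2_{\dot H^{-s}}=\tfrac12\|u_0\|^2_{\dot H^{-s}}+\tfrac12\|u_1\|^2_{\dot H^{-s}}-\tfrac14\|u_0-u_1\|^2_{\dot H^{-s}}$, which is $<\tfrac12\|u_0\|^2_{\dot H^{-s}}+\tfrac12\|u_1\|^2_{\dot H^{-s}}$ when $u_0\neq u_1$. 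Hence if $u_0,u_1$ were two minimizers, $\tfrac12(u_0+u_1)\in\cP(\R^d)$ would satisfy $F(\tfrac12(u_0+u_1))<m$ unless $\|u_0-u_1\|_{\dot H^{-s}}=0$, i.e.\ $u_0=u_1$. I expect the main obstacle to be the tightness step in the existence argument: the $\dot H^{-s}$-bound is far too weak to prevent the minimizing sequence from losing mass at infinity (spreading mass to infinity even drives that norm to zero), so narrow precompactness \emph{and} the fact that the narrow limit remains a probability measure must be extracted from the $\bW_\bm^2$-bound via the structural results of \cite{DNS}; the elementary combinatorial facts $\lambda^{(k)}_i\geq 0$ and $\sum_i\lambda^{(k)}_i=1$ also deserve to be isolated as a separate lemma.
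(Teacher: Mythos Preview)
Your proposal is correct and follows essentially the same approach as the paper: existence via lower semicontinuity and weak$^*$ compactness of sublevel sets of $\bW_\bm^2(\cdot,\overline{u}^{k-1}_\tau)$ (the paper simply cites \cite[Theorems~5.5 and~5.6]{DNS} for this), and uniqueness via convexity of $u\mapsto\bW_\bm^2(u,\overline{u}^{k-1}_\tau)$ (the paper cites \cite[Theorem~5.11]{DNS}) together with strict convexity of $u\mapsto\tfrac12\|u\|^2_{\dot H^{-s}}$. The combinatorial facts $-b^{(k)}_{k-i}\geq 0$ and $\sum_{i=0}^{k-1}(-b^{(k)}_{k-i})=1$ that you single out are precisely the content of the paper's Lemma~\ref{L-sum of b_i}, so your suggestion to isolate them is already realized there.
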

\begin{proof}
It is clear that the map $u\mapsto \frac{C_\alpha}{2\tau^\alpha}\bW^2_\mathbf{m}(u,\overline{u}^{k-1}_\tau)+\frac{1}{2}\|u\|^2_{\dot{H}^{-s}(\R^d)}$ is bounded below. Moreover, by \cite[Theorem 5.5 and Theorem 5.6]{DNS} we get that it is lower-semicontinuous and has compact sub-levels under the weak* topology. Therefore, the scheme (\ref{JKO schemes}) has solutions. The uniqueness of the solution follows from the convexity of $\bW_\bm$ \cite[Theorem 5.11]{DNS} and the strict convexity of the map $u\mapsto \frac{1}{2}\|u\|^2_{\dot{H}^{-s}(\R^d)}$.
\end{proof}

Now we study gradient formulation of our scheme. Let $\varphi\in C_c^\infty (\R^d)$. Given $\delta>0$, let $S_\delta$ be the semigroup defined by $\mathbf{S}_{\delta,t}v_0=v_t$ for every $t>0$ with $v_t$ is the unique solution of the following equation with initial data $v_0\in \cP(\R^d)$
$$\partial_tv_t-\text{div}(\bm(v_t)\nabla \varphi)-\delta \Delta v_t=0\text{ in }(0,\infty)\times \R^d.$$
Let $\rho\in C_c^\infty \left([0,1]\times\R^d\right)$ such that $\rho(t)\in \cP(\R^d)$ for every $t\in [0,1]$. For every $h>0$ and $t>0$ we define $\rho^h(t):=\mathbf{S}_{\delta,ht}\rho(t)\in \cP(\R^d)$. Let $\phi^h$ be the unique solution of $$\partial_t\rho^h(t,x)=-\text{div}(\bm(\rho^h(t,x)))\vert \nabla \phi^h(t,x)\vert^2\text{ in }[0,1]\times \R^d.$$
For every $h,t>0$ we define $$\mathbf{A}^h(t):=\int_{\R^d}\bm(\rho^h(t,x))\vert \nabla \phi^h(t,x)\vert^2dx.$$
Next, let $U:[0,+\infty)\to [0,\infty)$ be the function defined by $U''(s):=1/\bm(s)$ with $U'(0)=U(0)=0$. Then we define $$\mathbf{U}(u):=\int_{\R^d}U(u(x))dx\text{ for every }u\in\cP(\R^d).$$
For any $\delta>0$, we define $\mathbf{V}_\delta:\cP(\R^d)\to \R$ by $$V_\delta (v):=\langle \varphi,v\rangle  +\delta \mathbf{U}(v)\text{ for every }v\in \cP(\R^d).$$

Let us recall several basic results of L1 scheme for discretizing the Caputo derivative $\partial^\alpha_t \varphi$, introduced by \cite{LX} and applied to solve time-fractional Fokker-Planck equations \cite{DJ}.

\begin{lemma}\label{L-sum of b_i}
\cite[Lemma 3.1]{DJ} For every $k\in \mathbb{N},k\geq 1$ and $0<\alpha <1$, we have $$\sum_{i=0}^{k-1}(-b^{(k)}_{k-i})=1\text{ and } \sum_{i=1}^k(-b^{(i)}_i)=k^{1-\alpha}.$$

\end{lemma}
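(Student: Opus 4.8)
The plan is to verify both identities by elementary telescoping arguments, so this should be the most routine lemma in the paper and I expect no serious obstacle. For the first identity, $\sum_{i=0}^{k-1}(-b^{(k)}_{k-i})=1$, I would reindex the sum by setting $j=k-i$, so that it becomes $\sum_{j=1}^{k}(-b^{(k)}_{j}) = -b^{(k)}_k - \sum_{j=1}^{k-1} b^{(k)}_j$. I would then substitute the explicit formula: $b^{(k)}_k = (k-1)^{1-\alpha}-k^{1-\alpha}$, while for $1\le j\le k-1$ the term $b^{(k)}_j = (j+1)^{1-\alpha}+(j-1)^{1-\alpha}-2j^{1-\alpha}$ is a discrete second difference of the sequence $n\mapsto n^{1-\alpha}$. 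Writing $a_n:=n^{1-\alpha}$, we have $b^{(k)}_j = (a_{j+1}-a_j)-(a_j-a_{j-1})$ for $1\le j\le k-1$, so $\sum_{j=1}^{k-1} b^{(k)}_j$ telescopes to $(a_k-a_{k-1})-(a_1-a_0) = (k^{1-\alpha}-(k-1)^{1-\alpha}) - 1$. Combining, $-b^{(k)}_k - \sum_{j=1}^{k-1} b^{(k)}_j = (k^{1-\alpha}-(k-1)^{1-\alpha}) - \big((k^{1-\alpha}-(k-1)^{1-\alpha})-1\big) = 1$, as claimed. (One should double-check the boundary cases $k=1$ and $k=2$ separately, since the middle range $j=1,\dots,k-1$ is empty or a single term there; a direct substitution handles these.)

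For the second identity, $\sum_{i=1}^k(-b^{(i)}_i)=k^{1-\alpha}$, note that only the \emph{diagonal} coefficients $b^{(i)}_i$ enter, and by definition $b^{(i)}_i = (i-1)^{1-\alpha}-i^{1-\alpha}$ for every $i\ge 1$ (with the convention $0^{1-\alpha}=0$ for $i=1$, which matches $b^{(1)}_1$ being read off the third case of the definition). Hence $-b^{(i)}_i = i^{1-\alpha}-(i-1)^{1-\alpha} = a_i - a_{i-1}$, and the sum $\sum_{i=1}^k (a_i-a_{i-1})$ telescopes immediately to $a_k - a_0 = k^{1-\alpha}$. Both computations are purely formal manipulations of first and second finite differences; the only point requiring a little care is the consistency of the piecewise definition of $b^{(k)}_i$ at the endpoints $i=0,1,k$, which I would confirm by spelling out $k=1$ and $k=2$ explicitly before invoking the telescoping in general. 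Since this is quoted as \cite[Lemma 3.1]{DJ}, one could alternatively just cite it, but the self-contained telescoping proof above is short enough to include.
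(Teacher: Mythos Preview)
Your telescoping argument is correct and complete; both identities follow exactly as you describe, and the boundary checks for $k=1,2$ are routine. Note that the paper does not give its own proof of this lemma at all---it simply cites \cite[Lemma~3.1]{DJ}---so your self-contained argument actually adds something rather than duplicating the paper's approach.
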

\begin{lemma}\label{L-approximate fractional} Let $T>0$ and we consider $\tau=\frac{T}{N}$ with $N\in\N$ as a uniform partition of the interval $[0,T]$. Then for every test function $\varphi \in C^\infty_c(\mathbb{R}^d)$ and every $N\in \N$, we have \begin{enumerate}
	\item \cite[equations (3.12) and (3.13)]{LX}  \begin{align*}
\lim_{\tau\to 0}\frac{C_\alpha}{\tau^\alpha}\sum_{i=k}^Nb_{i-k}^{(N-k)}\varphi (t+(i-k)\tau)&={}_t D^\alpha_T\varphi (t),\,\forall t\in ((k-1)\tau, k\tau], k=1,\ldots, N.
\end{align*}
        \item \cite[Theorem 3.1]{DJ}  \begin{align*}
\lim_{\tau\to 0}\frac{C_\alpha}{\tau^\alpha}\sum_{k=1}^N b{(n)}_n\int_{(k-1)\tau}^{k\tau}\varphi(t)dt
&=-\frac{1}{\Gamma(1-\alpha)}\int_0^Tt^{-\alpha}\varphi(t)dt.\end{align*}
\end{enumerate}
\end{lemma}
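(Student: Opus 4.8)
Both identities are quoted from \cite{LX} and \cite{DJ}, so my plan is to reconstruct the quadrature argument behind them: the coefficients $b^{(m)}_j$ are designed so that a weighted sum of nodal values of a function reproduces, up to $O(\tau)$, the Caputo derivative of its piecewise-linear interpolant, hence a Riemann-type approximation of the singular integral defining that derivative. In both parts the steps are the same: (a) use the definition of $b^{(m)}_j$ and a summation by parts to rewrite the weighted sum as a sum of the kernel against difference quotients of $\varphi$; (b) recognize the limiting integral; (c) control the quadrature error using $\varphi\in C_c^\infty$.

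For part~(1) I would set $m:=N-k$, $\psi_j:=\varphi(t+j\tau)$, and $c_j:=(j+1)^{1-\alpha}-j^{1-\alpha}$. From the definition, $b^{(m)}_0=c_0=1$, $b^{(m)}_j=c_j-c_{j-1}$ for $1\le j\le m-1$, and $b^{(m)}_m=-c_{m-1}$, so an Abel summation collapses the sum to the telescoped form
$$\frac{C_\alpha}{\tau^\alpha}\sum_{j=0}^{m}b^{(m)}_j\psi_j=\frac{C_\alpha}{\tau^\alpha}\sum_{j=0}^{m-1}c_j(\psi_j-\psi_{j+1}).$$
Then I would use $c_j=(1-\alpha)\int_j^{j+1}r^{-\alpha}\,dr$ and $C_\alpha(1-\alpha)=\tfrac{1}{\Gamma(1-\alpha)}$; rescaling $r\mapsto\sigma/\tau$ and writing $\psi_j-\psi_{j+1}=-\int_{j\tau}^{(j+1)\tau}\varphi'(t+\sigma)\,d\sigma$ turns the right-hand side into
$$-\frac{1}{\Gamma(1-\alpha)}\sum_{j=0}^{m-1}\Bigl(\tfrac{1}{\tau}\!\int_{j\tau}^{(j+1)\tau}\!\sigma^{-\alpha}\,d\sigma\Bigr)\int_{j\tau}^{(j+1)\tau}\!\varphi'(t+\sigma)\,d\sigma,$$
a Riemann approximation of $\tfrac{1}{\Gamma(1-\alpha)}\int_0^{T-t}\sigma^{-\alpha}\varphi'(t+\sigma)\,d\sigma$, which after the substitution $s=t+\sigma$ is the right-sided Caputo derivative ${}_tD^\alpha_T\varphi(t)$ (up to the sign convention for ${}_tD^\alpha_T$). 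For the error I would argue cellwise: the oscillation of $\sigma^{-\alpha}$ on the $j$-th cell is $O(j^{-1}(j\tau)^{-\alpha})$, so replacing the cell-average of $\sigma^{-\alpha}$ by $\sigma^{-\alpha}$ itself costs $O(\tau^{1-\alpha}j^{-1-\alpha})$, which sums to $O(\tau^{1-\alpha})$ because $\sum_jj^{-1-\alpha}<\infty$; the $j=0$ cell and the mismatch between the endpoint $m\tau=T-k\tau$ and $T-t$ each contribute $O(\tau^{1-\alpha})$ as well, using $\varphi\in C_c^\infty$.

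For part~(2) only the diagonal coefficient appears, and I would write $b^{(k)}_k=(k-1)^{1-\alpha}-k^{1-\alpha}=-(1-\alpha)\int_{k-1}^{k}r^{-\alpha}\,dr$. Rescaling $r\mapsto\sigma/\tau$ and using $C_\alpha(1-\alpha)=\tfrac{1}{\Gamma(1-\alpha)}$ gives
$$\frac{C_\alpha}{\tau^\alpha}\sum_{k=1}^{N}b^{(k)}_k\!\int_{(k-1)\tau}^{k\tau}\!\varphi\,dt=-\frac{1}{\Gamma(1-\alpha)}\sum_{k=1}^{N}\Bigl(\tfrac{1}{\tau}\!\int_{(k-1)\tau}^{k\tau}\!\sigma^{-\alpha}\,d\sigma\Bigr)\int_{(k-1)\tau}^{k\tau}\!\varphi\,dt.$$
Writing $\tfrac{1}{\tau}\int_{(k-1)\tau}^{k\tau}\varphi\,dt=\varphi(\eta_k)$ (mean value theorem) and grouping $\bigl(\int_{(k-1)\tau}^{k\tau}\sigma^{-\alpha}\,d\sigma\bigr)\varphi(\eta_k)=\int_{(k-1)\tau}^{k\tau}\sigma^{-\alpha}\varphi(\eta_k)\,d\sigma$, the difference of this sum from $\int_0^T\sigma^{-\alpha}\varphi(\sigma)\,d\sigma$ is $\sum_k\int_{(k-1)\tau}^{k\tau}\sigma^{-\alpha}(\varphi(\eta_k)-\varphi(\sigma))\,d\sigma$, bounded by $\|\varphi'\|_\infty\,\tau\int_0^T\sigma^{-\alpha}\,d\sigma=O(\tau)\to 0$; integrability of $\sigma^{-\alpha}$ near $0$ (valid since $\alpha<1$) makes the singular endpoint harmless, so the limit is $-\tfrac{1}{\Gamma(1-\alpha)}\int_0^T t^{-\alpha}\varphi(t)\,dt$.

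The only genuinely delicate point in either part is the uniform control of the quadrature error near the kernel singularity $\sigma\to 0$; in both cases this is dispatched by $\sigma^{-\alpha}\in L^1_{\mathrm{loc}}$ together with the boundedness of $\varphi$ and $\varphi'$ coming from $\varphi\in C_c^\infty$, while away from the singularity one merely approximates a smooth compactly supported integrand. Since the statement is taken verbatim from \cite[eqs.~(3.12)--(3.13)]{LX} and \cite[Theorem~3.1]{DJ}, recording this computation and invoking those estimates suffices.
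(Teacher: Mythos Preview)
The paper gives no proof of this lemma at all; it is recorded as a quotation from \cite{LX} and \cite{DJ}. Your reconstruction is the standard L1-scheme quadrature argument behind those references and is correct (the Abel summation, the identity $C_\alpha(1-\alpha)=1/\Gamma(1-\alpha)$, and the cellwise error control using $\sigma^{-\alpha}\in L^1_{\mathrm{loc}}$ are exactly what is needed), so there is nothing in the paper to compare against---you have in fact supplied more than the paper does.
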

\begin{lemma} \label{L-approximate fractional 1}  \cite[Lemma 3.3]{DJ} With the same assumptions as in Lemma \ref{L-approximate fractional}, for every $N\in \N$ and every $\psi\in C^1[0,T]$ such that $\psi(T)=0$, we have \begin{align*}
\sum_{k=1}^N \int_{(k-1)\tau}^{k\tau}\bigg(u^k_\tau -\sum_{i=1}^{k-1}(-b^{(k)}_{k-i}u^i_\tau) \bigg)\psi(t)dt= & \sum_{k=1}^N \int_{(k-1)\tau}^{k\tau}\hat{u}_\tau(t) \bigg(\sum_{i=k}^{N}b_{i-k}^{N-k}\psi(t+(i-k)\tau) \bigg)dt\\
 &+ u(0)\sum_{k=1}^N b_k^{(k)}\int_{(k-1)\tau}^{k\tau}\psi(t)dt.
\end{align*}
\end{lemma}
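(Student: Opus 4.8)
The statement is the discrete counterpart, at the level of the L1 time-discretization, of the integration-by-parts identity that moves $\partial_t^\alpha$ off $u$ and onto the test function; its proof is purely algebraic, namely a rearrangement of a finite double sum. First I would use that $\hat{u}_\tau$ and each $u^i_\tau$ are constant on every subinterval $I_k:=((k-1)\tau,k\tau]$. Writing $\Psi_k:=\int_{I_k}\psi(t)\,dt$, the left-hand side collapses to the finite linear combination $\sum_{k=1}^{N}\big(u^k_\tau-\sum_{i=1}^{k-1}(-b^{(k)}_{k-i})u^i_\tau\big)\Psi_k$. For the first term on the right-hand side, the substitution $s=t+(i-k)\tau$ is admissible since $k\le i\le N$ forces $s$ to range over $I_i\subset[0,T]$ --- this is where $\psi\in C^1[0,T]$, i.e. $\psi$ being defined up to $t=T$, is used --- and it converts $\int_{I_k}\psi(t+(i-k)\tau)\,dt$ into $\Psi_i$. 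Hence the whole right-hand side is also a finite linear combination of the products $u^i_\tau\Psi_k$, the term $u(0)\sum_k b^{(k)}_k\Psi_k$ already being of this form.

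Next I would interchange the order of summation on both sides so that the outer index is the one carrying $\Psi_j$ and the inner one runs over the $u$-values that feed it; this interchange for finite sums is the discrete integration by parts. Equating the coefficient of each $\Psi_j$, $j=1,\dots,N$, reduces the claim to a finite family of scalar identities among the numbers $b^{(k)}_i$. These I would verify through the telescoping structure of the L1 weights: with $a_m:=(m+1)^{1-\alpha}-m^{1-\alpha}$ one has $b^{(k)}_0=a_0=1$, $b^{(k)}_i=a_i-a_{i-1}$ for $1\le i\le k-1$, and $b^{(k)}_k=-a_{k-1}$, whence $u^k_\tau-\overline{u}^{k-1}_\tau=\sum_{j=1}^{k}a_{k-j}\big(u^j_\tau-u^{j-1}_\tau\big)$; combined with the summation identities $\sum_{i=0}^{k-1}(-b^{(k)}_{k-i})=1$ and $\sum_{i=1}^{k}(-b^{(i)}_i)=k^{1-\alpha}$ of Lemma \ref{L-sum of b_i}, the required coefficient identities follow by direct computation.

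The step I expect to be the main obstacle is the bookkeeping of the endpoint contributions produced by the double rearrangement. One endpoint --- the index $i=0$, corresponding to the initial datum $u^0_\tau=u(0)$ --- is precisely what generates the term $u(0)\sum_{k=1}^{N}b^{(k)}_k\int_{I_k}\psi$ on the right-hand side; the other endpoint would, in general, leave a residual term supported near $t=T$, and it is exactly the hypothesis $\psi(T)=0$ that forces this residual to vanish, so that $\sum_{i=k}^{N}b^{(N-k)}_{i-k}\psi(t+(i-k)\tau)$ --- the discrete analogue of the right-sided Caputo derivative ${}_tD^\alpha_T\psi$ appearing in Lemma \ref{L-approximate fractional} --- is exactly the adjoint object on the right-hand side. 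Once these two boundary terms are matched, the remaining interior terms cancel by the telescoping above together with Lemma \ref{L-sum of b_i}.
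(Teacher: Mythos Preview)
The paper does not supply its own proof of this lemma; it is quoted verbatim from \cite[Lemma~3.3]{DJ}, so there is nothing in the present paper to compare your argument against. That said, your approach---collapsing both sides to finite linear combinations of the products $u^j_\tau\,\Psi_m$ with $\Psi_m:=\int_{(m-1)\tau}^{m\tau}\psi(t)\,dt$, using the shift $s=t+(i-k)\tau$ to turn $\int_{I_k}\psi(t+(i-k)\tau)\,dt$ into $\Psi_i$, interchanging the order of the resulting finite double sum, and then matching coefficients via the telescoping representation $b^{(k)}_i=a_i-a_{i-1}$ of the L1 weights---is precisely the standard derivation and is correct.

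One small remark: the identity here is a purely algebraic rearrangement of a finite double sum, so the hypothesis $\psi(T)=0$ is not actually what makes a boundary term at $t=T$ disappear in this discrete statement (it does not force $\Psi_N=0$). That hypothesis is carried along because it is needed later, in Lemma~\ref{L-approximate fractional}, for the limit $\tau\to0$ to recover the right-sided Caputo derivative ${}_tD^\alpha_T\psi$. The genuine endpoint contribution that your rearrangement isolates is the one at $i=0$, which is exactly the $u(0)\sum_k b^{(k)}_k\Psi_k$ term, as you correctly identify.
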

We also recall basic results of our modified Wasserstein distance and its flow interchange property.
	\begin{lemma}\cite[Lemma 2.1]{CN}
		\label{apro}
		Let $\mathbf{m}\in C^\infty (\R_+,\R_+)$ such that $\inf \bm>0$. Let $\mu^0,\mu^1\in \cP(\R^d)\cap L^1(\R^d)$ be such that $\mathbf{W}_{\bm}(\mu^0,\mu^1)<\infty$. Then there exist $\rho_n\in C^\infty_c([0,1]\times\R^d)$ and $\phi_n\in C^\infty([0,1]\times \R^d)\cap L^\infty([0,1],H^2(\R^d))$ such that 
		\begin{enumerate}	
			\item $\rho_n(t)\in \cP(\R^d)$ for every $ t\in[0,1] $, $||\rho_n(0)-\mu^0||_{L^1(\R^d)}+||\rho_n(1)-\mu^1||_{L^1(\R^d)}\to 0$ as $n\to \infty.$
			\item $\rho_n,\phi_n$ satisfies $\partial_t\rho_{n}(t,x)=-\operatorname{div}(\bm(\rho_n(t,x))\nabla\phi_n(t,x))$ and 
			\begin{align*}
				\mathbf{W}_{\bm}^2(\mu^0,\mu^1)=	\lim_{n\to +\infty}\int_{0}^{1}\int_{\R^d}\bm(\rho_n(t,x))|\nabla_x \phi_n(t,x)|^2 dxdt.
			\end{align*}
\end{enumerate}
	\end{lemma}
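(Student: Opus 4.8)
The plan is to take a path that is almost optimal for $\bW_\bm(\mu^0,\mu^1)$ and regularize it through successive operations — extending it constantly in time past the endpoints, mollifying jointly in $(t,x)$, truncating the spatial support, and finally replacing the transporting velocity field by a gradient — controlling the action at each step; a diagonal extraction then produces $\{\rho_n\}$ and $\{\phi_n\}$. One half of the final identity comes for free: once $(\rho_n,\phi_n)$ satisfies item (1) and $\partial_t\rho_n=-\operatorname{div}(\bm(\rho_n)\nabla\phi_n)$, the pair $(\rho_n,\bm(\rho_n)\nabla\phi_n)$ lies in $\mathcal{CE}(\rho_n(0)\to\rho_n(1))$, so $\bW_\bm^2(\rho_n(0),\rho_n(1))\le\int_0^1\int_{\R^d}\bm(\rho_n)|\nabla\phi_n|^2\,dx\,dt$, and letting $n\to\infty$ — using the $L^1$, hence weak$^*$, convergence of the endpoints and the lower semicontinuity of $\bW_\bm$ \cite[Theorem 5.5]{DNS} — gives $\bW_\bm^2(\mu^0,\mu^1)\le\liminf_n\int_0^1\int_{\R^d}\bm(\rho_n)|\nabla\phi_n|^2\,dx\,dt$. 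So the real task is to build such $(\rho_n,\phi_n)$ with $\limsup_n\int_0^1\int_{\R^d}\bm(\rho_n)|\nabla\phi_n|^2\,dx\,dt\le\bW_\bm^2(\mu^0,\mu^1)$.

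To do this I would fix $\varepsilon>0$ and pick, by \cite[Theorem 5.4]{DNS}, a pair $(\mu_t,\nu_t)\in\mathcal{CE}(\mu^0\to\mu^1)$ with densities $\mu_t=\sigma_t\mathcal{L}^d$, $\nu_t=\varrho_t\mathcal{L}^d$ and $\int_0^1\int_{\R^d}\zeta(\sigma_t,\varrho_t)\,dx\,dt\le\bW_\bm^2(\mu^0,\mu^1)+\varepsilon$. Extend it to $\R$ by $(\mu^0,0)$ for $t\le0$ and $(\mu^1,0)$ for $t\ge1$ — still a distributional solution of \eqref{CE}, thanks to $\mu_0=\mu^0$, $\mu_1=\mu^1$ and weak$^*$ continuity in $t$ — and mollify jointly in $(t,x)$ against a smooth kernel $\kappa_\eta$ supported in a small ball; since convolution commutes with $\partial_t$ and $\operatorname{div}$, the result $(\rho^\eta_t,w^\eta_t)$ is smooth and solves \eqref{CE}, and because the action density $\zeta(r,s)=|s|^2/\bm(r)$ is jointly convex — one of the structural conditions checked via \cite[Theorems 3.1 and 3.3]{DNS} from $\bm$ concave with $\inf\bm>0$ — Jensen's inequality together with Fubini gives $\int_0^1\int_{\R^d}\zeta(\rho^\eta_t,w^\eta_t)\,dx\,dt\le\int_0^1\int_{\R^d}\zeta(\sigma_t,\varrho_t)\,dx\,dt$, i.e.\ the action does not increase. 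A cutoff $\rho^\eta_t\mapsto\rho^\eta_t\chi_R$, with the $O(1/R)$ of lost mass put back by adding a fixed smooth probability bump supported far away and the continuity equation repaired by a small lower-order flux, then yields genuinely compactly supported smooth densities at a further cost $o_R(1)$; this uses the uniform-in-$t$ smallness of $\int_{\{|x|>R\}}\sigma_t\,dx$, which follows from the second-moment bound implied by finite action. The endpoint densities converge to $\mu^0,\mu^1$ at least weakly$^*$; to upgrade this to the $L^1$ convergence demanded in item (1) one prepends and appends short connecting segments joining $\mu^0,\mu^1$ to suitable mollified/truncated densities, of vanishing $\bW_\bm$-cost.

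For the last step, writing the resulting smooth compactly supported path as $\rho$ and letting $W_t$ be the smooth field with $\partial_t\rho_t=-\operatorname{div}(\bm(\rho_t)W_t)$, note that $v\mapsto-\operatorname{div}(\bm(\rho_t)\nabla v)$ is uniformly elliptic with smooth coefficients (here $\inf\bm>0$ is used) and $\int_{\R^d}\partial_t\rho_t\,dx=0$, so the weighted Poisson equation $-\operatorname{div}(\bm(\rho_t)\nabla\phi_t)=\partial_t\rho_t$ has a solution $\phi_t$ with $\nabla\phi_t\in L^2(\R^d)$; the $\bm(\rho_t)$-weighted Helmholtz decomposition $W_t=\nabla\phi_t+W_t^\perp$, $\operatorname{div}(\bm(\rho_t)W_t^\perp)=0$, gives $\int_{\R^d}\bm(\rho_t)|\nabla\phi_t|^2\,dx\le\int_{\R^d}\bm(\rho_t)|W_t|^2\,dx$ for each $t$, so the action still does not grow. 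Differentiating the elliptic equation in $t$ and using elliptic regularity — $\partial_t\rho$ and $\bm(\rho)$ being smooth in $(t,x)$ — promotes $\phi$ to $C^\infty([0,1]\times\R^d)$ and, via the $H^2$ estimate for the uniformly elliptic operator, to $L^\infty((0,1);H^2(\R^d))$. A diagonal choice as $\varepsilon,\eta,1/R$ and the length of the connecting segments all tend to $0$ finishes the argument. I expect the main difficulty to be the endpoint handling combined with the truncation: one must keep every slice $\rho_n(t)$ a probability density, preserve the continuity-equation constraint exactly, secure the $L^1$ convergence of $\rho_n(0),\rho_n(1)$, and at the same time drive to zero, uniformly in $t$, the extra action coming from the connecting segments and the mass-correcting bumps — whereas the interior mollification and the passage from $W_t$ to $\nabla\phi_t$ are essentially automatic once the joint convexity of $\zeta$ and $\inf\bm>0$ are in hand.
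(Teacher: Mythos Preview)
The paper does not prove this lemma at all: it is simply quoted from \cite[Lemma~2.1]{CN}, so there is no in-paper argument to compare your proposal against. Your outline is precisely the standard route to such smooth-approximation results for nonlinear-mobility Wasserstein distances (cf.\ \cite{DNS,CLSS,LMS12}): take a near-optimal curve, extend it constantly in time, mollify in $(t,x)$ and use the joint convexity of $\zeta(r,s)=|s|^2/\bm(r)$ via Jensen, truncate in space, and finally project the velocity onto gradients through the weighted Helmholtz decomposition, where $\inf\bm>0$ gives uniform ellipticity of $-\operatorname{div}(\bm(\rho_t)\nabla\cdot)$. The lower-semicontinuity argument for the easy inequality is exactly right.

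One point deserves care. You justify the truncation step by invoking ``the second-moment bound implied by finite action'' to get uniform-in-$t$ tail smallness of $\sigma_t$. The hypotheses of the lemma are only $\mu^0,\mu^1\in\cP(\R^d)\cap L^1(\R^d)$ and $\bW_\bm(\mu^0,\mu^1)<\infty$; no second moments are assumed, and finite action alone does not automatically produce them. The clean way to obtain the tightness you need is instead from the weak$^*$ continuity of $t\mapsto\mu_t$ on the compact interval $[0,1]$: the image $\{\mu_t:t\in[0,1]\}$ is then weak$^*$ compact in $\cP(\R^d)$, hence tight by Prokhorov. With that correction, and keeping in mind---as you yourself flag---that the endpoint-matching/truncation step is where the bookkeeping is delicate (preserving $\rho_n(t)\in\cP(\R^d)$, exact continuity equation, $L^1$ endpoint convergence, vanishing extra action), the sketch is sound.
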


\begin{lemma}\label{Eulerian calculus} \cite[Lemma 3.3 and inequalities (3.14) and (4.4)]{CN}
Let $\mathbf{m}\in C^{\infty}(\R_+,\R_+)$ such that $\inf \bm>0$ and $\mathbf{m}''\leq 0$. 
\begin{enumerate}
	\item \label{L-Eulerian calculus} for every $t\in [0,1]$ and $h\geq 0$ one has $$\frac{1}{2}\partial_h\mathbf{A}^h(t)+\partial_t\mathbf{V}_{\delta}(\rho^h(t))\leq -\lambda_\delta\mathbf{A}^h(t),$$ where
\begin{align}\label{formula of delta}
\lambda_\delta =-\Vert D^2\varphi\Vert _{L^\infty}\sup_{z>0}\vert \bm'(z)\vert-\frac{1}{2\delta}\Vert \nabla \varphi\Vert^2_{L^\infty}\sup_{z>0}\left(\bm(z)\vert \bm''(z)\vert\right).\end{align}
\item \label{L-flows of modified equations1} for every  $ \xi,\mu\in\cP(\R^d) $ such that $ \mathbf{V}_\delta(\xi),\mathbf{V}_\delta(\mu),\mathbf{W}_\mathbf{m}\xi,\mu)<\infty$ then 
\begin{align*}
			\frac{1}{2}\limsup_{h\to 0}\frac{\mathbf{W}^2_{\bm}(\mathbf{S}_{\delta,h}(\xi),\mu)-\mathbf{W}^2_{\bm}(\xi,\mu)^2}{h}+\dfrac{\lambda_\delta}{2}\mathbf{W}^2_{\bm}(\mu,\xi)+ \mathbf{V}_\delta(\xi)\leq  \mathbf{V}_\delta(\mu).
		\end{align*}
\item \label{F-convergence of solution W_m}  for every $\mu,\xi\in\cP(\R^d)$ such that $\mathbf{U}(\mu),\mathbf{U}(\xi)<\infty$ and $\mathbf{W}_\mathbf{m}(\mu,\xi)<\infty$ we have \begin{align*}
			\limsup_{h\to 0}\dfrac{\mathbf{W}^2_\mathbf{m}(\mathbf{H}_h\xi,\mu)-\mathbf{W}^2_\mathbf{m}(\xi,\mu)}{2h}\leq \mathbf{U}(\mu)-\mathbf{U}(\xi),
		\end{align*}
where $\mathbf{H}_t$ is the semigroup with respect to the heat equation $v_t=\Delta v$ in $\R^d$ with initial data $v_0\in \cP(\R^d)$.

\end{enumerate}

\end{lemma}
\begin{remark}
In  \cite[Lemma 3.3 and inequalities (3.14) and (4.4)]{CN}, authors considered the function $\mathbf{m}$ is given by $\mathbf{m}(z)=(z+\tau^{1/10})^{\alpha}$ ($0<\alpha\leq 1$) and $\mathbf{m}(z)=(z+1)^{\tau^{1/10}}$. However, the proof there also works for every $\mathbf{m}\in C^{\infty}(\R_+,\R_+)$ such that $\inf \bm>0$ and $\mathbf{m}''\leq 0$. 
\end{remark}

\begin{lemma}
\label{L-flows of modified equations}
Let $\lambda_\delta$ as in Lemma \ref{Eulerian calculus}, $\{u^k_\tau\}_{k\in\N}$ be the solution of \eqref{JKO schemes} and $\varphi \in C^\infty_c(\mathbb{R})$ be a test function then \begin{align*}
			\frac{C_\alpha}{\tau^\alpha}\left(\mathbf{V}_\delta(u^k_\tau)-\mathbf{V}_\delta(\overline{u}^{k-1}_\tau)\right)\leq -\delta||u^k_\tau||_{\dot{H}^{1-s}}^2+\langle \operatorname{div} \left(\bm(u^k_\tau)\nabla (-\Delta) ^{-s}u^k_\tau\right),\varphi \rangle-\dfrac{\lambda_\delta C_\alpha}{2\tau^\alpha}\mathbf{W}^2_{\bm}(u^k_\tau,\overline{u}^{k-1}_\tau),
		\end{align*}
for every $\tau>0$ and every $k\in\N$.\end{lemma}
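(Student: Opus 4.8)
The plan is to use the flow interchange technique: we perturb the minimizer $u^k_\tau$ of the scheme \eqref{JKO schemes} along the semigroup $\mathbf{S}_{\delta,h}$ generated by $\partial_t v_t = \textup{div}(\bm(v_t)\nabla\varphi) + \delta\Delta v_t$, and exploit minimality together with the Eulerian calculus estimates in Lemma \ref{Eulerian calculus}. Fix $k\in\N$ and $h>0$. Since $u^k_\tau$ is the minimizer of $u\mapsto \frac{C_\alpha}{2\tau^\alpha}\bW^2_\bm(u,\overline{u}^{k-1}_\tau) + \frac12\|u\|^2_{\dot H^{-s}}$ over $\cP(\R^d)$, and $\mathbf{S}_{\delta,h}u^k_\tau$ is an admissible competitor, we get
\begin{align*}
\frac{C_\alpha}{2\tau^\alpha}\bW^2_\bm(\mathbf{S}_{\delta,h}u^k_\tau,\overline{u}^{k-1}_\tau) + \frac12\|\mathbf{S}_{\delta,h}u^k_\tau\|^2_{\dot H^{-s}} \geq \frac{C_\alpha}{2\tau^\alpha}\bW^2_\bm(u^k_\tau,\overline{u}^{k-1}_\tau) + \frac12\|u^k_\tau\|^2_{\dot H^{-s}}.
\end{align*}
Rearranging, dividing by $h>0$, and letting $h\to 0^+$, the first term is controlled by applying part \eqref{L-flows of modified equations1} of Lemma \ref{Eulerian calculus} with $\xi = u^k_\tau$ and $\mu = \overline{u}^{k-1}_\tau$, which yields
\begin{align*}
\frac12\limsup_{h\to0}\frac{\bW^2_\bm(\mathbf{S}_{\delta,h}u^k_\tau,\overline{u}^{k-1}_\tau) - \bW^2_\bm(u^k_\tau,\overline{u}^{k-1}_\tau)}{h} \leq \mathbf{V}_\delta(\overline{u}^{k-1}_\tau) - \mathbf{V}_\delta(u^k_\tau) - \frac{\lambda_\delta}{2}\bW^2_\bm(u^k_\tau,\overline{u}^{k-1}_\tau).
\end{align*}

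For the homogeneous Sobolev term, I would compute the dissipation rate $\frac{d}{dh}\big|_{h=0^+}\frac12\|\mathbf{S}_{\delta,h}u^k_\tau\|^2_{\dot H^{-s}}$. Writing $v_h := \mathbf{S}_{\delta,h}u^k_\tau$ so that $\partial_h v_h = \textup{div}(\bm(v_h)\nabla\varphi) + \delta\Delta v_h$, and using that $(-\Delta)^{-s}$ is the Riesz potential realizing the $\dot H^{-s}$ inner product, one has
\begin{align*}
\frac{d}{dh}\Big|_{h=0^+}\frac12\|v_h\|^2_{\dot H^{-s}} = \langle u^k_\tau,\ \textup{div}(\bm(u^k_\tau)\nabla\varphi) + \delta\Delta u^k_\tau\rangle_{\dot H^{-s}} = \langle (-\Delta)^{-s}u^k_\tau,\ \textup{div}(\bm(u^k_\tau)\nabla\varphi)\rangle + \delta\langle (-\Delta)^{-s}u^k_\tau, \Delta u^k_\tau\rangle.
\end{align*}
The second summand equals $-\delta\|u^k_\tau\|^2_{\dot H^{1-s}}$ after integrating by parts and using $\langle(-\Delta)^{-s}w,-\Delta w\rangle = \|w\|^2_{\dot H^{1-s}}$; the first summand, after integrating by parts in $x$, becomes $-\langle \bm(u^k_\tau)\nabla(-\Delta)^{-s}u^k_\tau, \nabla\varphi\rangle = \langle \textup{div}(\bm(u^k_\tau)\nabla(-\Delta)^{-s}u^k_\tau), \varphi\rangle$. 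Multiplying the combined limiting inequality through by $\frac{C_\alpha}{\tau^\alpha}$ and rearranging gives exactly the claimed estimate.

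The main technical obstacle is the justification of the various limits and integrations by parts: one must ensure $\mathbf{S}_{\delta,h}u^k_\tau$ stays in $\cP(\R^d)\cap\dot H^{-s}(\R^d)$ with enough regularity for the $h$-derivative of $\|\cdot\|^2_{\dot H^{-s}}$ to exist and equal the formal expression (mass conservation of the semigroup and the parabolic smoothing by $\delta\Delta$ help here), and that $\mathbf{U}(u^k_\tau) = \mathbf{V}_\delta(u^k_\tau)/\delta - \langle\varphi,u^k_\tau\rangle/\delta < \infty$ so that Lemma \ref{Eulerian calculus}\eqref{L-flows of modified equations1} applies; the $L^p$-type bounds established later for $u^k_\tau$, together with the approximation Lemma \ref{apro}, supply the needed integrability. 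A secondary point is that the limsup in Lemma \ref{Eulerian calculus}\eqref{L-flows of modified equations1} combines with a genuine derivative of the $\dot H^{-s}$ term, so one should pass to $\limsup_{h\to0}$ of the whole difference quotient from the minimality inequality before splitting, which is legitimate since the $\dot H^{-s}$ part converges.
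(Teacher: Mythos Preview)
Your proposal is correct and follows essentially the same route as the paper: perturb the minimizer $u^k_\tau$ along the semigroup $\mathbf{S}_{\delta,h}$, use minimality to compare the JKO functionals, compute $\partial_h|_{h=0}\|\mathbf{S}_{\delta,h}u^k_\tau\|^2_{\dot H^{-s}}$ (the paper does this via the Fourier side while you integrate by parts in physical space, but the computations are equivalent), and combine with Lemma~\ref{Eulerian calculus}\eqref{L-flows of modified equations1}. Your final paragraph also correctly flags the $\limsup$/limit compatibility and the regularity needed for the formal derivative, which the paper handles implicitly.
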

\begin{proof}
We recall that the semigroup $ \mathbf{S}_\delta $ is given by $ \mathbf{S}_{\delta,t}v_0=v_t, $ where $ v $ is the solution of the following equation with initial data $v_0\in \cP(\R^d)$ \begin{align*}
			\partial_tv-\operatorname{div}(\bm(v)\nabla\varphi)-\delta\Delta v=0.
		\end{align*}
By the definition of $ u_\tau^k, $ we have that \begin{align*}
			\frac{C_\alpha}{2\tau^\alpha}\bW^2_\mathbf{m}(u^k_\tau,\overline{u}^{k-1}_\tau)+\frac{1}{2}\Vert u^k_\tau\Vert^2_{\dot{H}^{-s}(\R^d)}\leq\frac{C_\alpha}{2\tau^\alpha}\bW^2_\mathbf{m}(\mathbf{S}_{\delta,h}u^k_\tau,\overline{u}^{k-1}_\tau)+\frac{1}{2}\Vert \mathbf{S}_{\delta,h}u^k_\tau\Vert^2_{\dot{H}^{-s}(\R^d)}.
		\end{align*}
This implies that \begin{align}\label{Euler1}
\frac{C_\alpha}{2\tau^\alpha}\limsup_{h\to 0}\frac{\bW^2_\mathbf{m}(u^k_\tau,\overline{u}^{k-1}_\tau)-\bW^2_\mathbf{m}(\mathbf{S}_{\delta,h}u^k_\tau,\overline{u}^{k-1}_\tau))}{h}\leq \frac{1}{2}\partial_h\big|_{h=0}\Vert \mathbf{S}_{\delta,h}u^k_\tau\Vert^2_{\dot{H}^{-s}(\R^d)}.
\end{align}		
		Next, we have
		\begin{align}\label{Euler2}
			\partial_h\big|_{h=0}\Vert \mathbf{S}_{\delta,h}u^k_\tau\Vert^2_{\dot{H}^{-s}(\R^d)}=&2\int_{\R^d}|\xi|^{-2s}\overline{\widehat{u^k_\tau}}\partial_h\big|_{h=0}\widehat{\mathbf{S}_{\delta,h}u^k_\tau}d\xi\notag\\
			=&2\int_{\R^d}|\xi|^{-2s}\overline{\widehat{u^k_\tau}}\partial_h\big|_{h=0}\left(\int_{\R^d}e^{-ix\xi}\mathbf{S}_{\delta,h}u^k_\tau dx\right)d\xi\notag\\
			=&2\int_{\R^d}|\xi|^{-2s}\overline{\widehat{u^k_\tau}}\left(\int_{\R^d}e^{-ix\xi} \operatorname{div}(\bm(u^k_\tau)\nabla\varphi) dx+\delta\int_{\R^d}e^{-ix\xi} \Delta u^k_\tau dx\right)d\xi\notag\\
			=&2\int_{\R^d}|\xi|^{-2s}\overline{\widehat{u^k_\tau}}\left(\widehat{ \operatorname{div}(\bm(u^k_\tau)\nabla\varphi)}+\delta\widehat{\Delta u^k_\tau} dx\right)\notag\\
			=&2\langle \operatorname{div} \left(\bm(u^k_\tau)\nabla (-\Delta) ^{-s}u^k_\tau\right),\varphi \rangle-2\delta||u^k_\tau||_{\dot{H}^{1-s}}^2.
		\end{align}
Now, using \eqref{Euler1}, \eqref{Euler2} and Lemma \ref{Eulerian calculus}(\ref{L-flows of modified equations1}) we get the result.
\end{proof}

\begin{lemma}\label{L-estimate of u bar}
Let $F:\cP(\R^d)\to [0,\infty)$ be a convex function. For $T>0$, we set $\tau=\frac{T}{N}$ with $N\in \N$ and let $\{u^k_\tau\}_{k\in\N}$ be the solution of \eqref{JKO schemes}. Then for every $N\in \N$ and every $k=1,\ldots, N $ we have \begin{align*}
\sum_{i=1}^k \left( F(\overline{u}^k_\tau)-F(u^k_\tau)\right)\leq \left(\frac{T}{\tau} \right)^{1-\alpha}F(u_0).
\end{align*}
\end{lemma}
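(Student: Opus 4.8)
The sum on the left should of course be read as $\sum_{i=1}^{k}\bigl(F(\overline{u}^{i-1}_\tau)-F(u^i_\tau)\bigr)$, since the $i$-th step of \eqref{JKO schemes} produces $u^i_\tau$ from $\overline{u}^{i-1}_\tau$. The plan is to apply convexity of $F$ to the convex combination $\overline{u}^{i-1}_\tau$, sum over $i$, and then telescope using the explicit form of the weights $b^{(k)}_i$.

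First I would record the weight structure. Set $a_j:=(j+1)^{1-\alpha}-j^{1-\alpha}$ for $j\ge 0$. Directly from the definition one has $b^{(k)}_0=a_0=1$, $b^{(k)}_i=a_i-a_{i-1}$ for $1\le i\le k-1$, and $b^{(k)}_k=-a_{k-1}$; moreover $a_j>0$ and $(a_j)_{j\ge 0}$ is nonincreasing, because $t\mapsto t^{1-\alpha}$ is increasing and concave. Hence each $-b^{(k)}_i$ with $1\le i\le k$ is nonnegative, and by Lemma \ref{L-sum of b_i} they sum to $1$. Re-indexing $\overline{u}^{k-1}_\tau=\sum_{i=0}^{k-1}(-b^{(k)}_{k-i})u^i_\tau$ via $i\mapsto k-i$ gives $\overline{u}^{k-1}_\tau=\sum_{i=1}^{k}(-b^{(k)}_i)u^{k-i}_\tau$, a convex combination inside the convex set $\cP(\R^d)$; so $F(\overline{u}^{k-1}_\tau)$ is defined and, by convexity of $F$,
\[
F(\overline{u}^{k-1}_\tau)-F(u^k_\tau)\le \sum_{i=1}^{k}(-b^{(k)}_i)F(u^{k-i}_\tau)-F(u^k_\tau).
\]

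Next I would sum this over the first $k$ steps. Write $f_j:=F(u^j_\tau)\ge 0$. Summing over $m=1,\dots,k$, the right-hand side equals $\sum_{m=1}^{k}\sum_{i=1}^{m}(-b^{(m)}_i)f_{m-i}-\sum_{m=1}^{k}f_m$, and interchanging the order of summation (set $j=m-i$) turns the double sum into $\sum_{j=0}^{k-1}f_j\bigl(\sum_{m=j+1}^{k}(-b^{(m)}_{m-j})\bigr)$. The inner sum is evaluated with the identities above: for $j=0$ only the boundary case $i=m$ occurs, giving $\sum_{m=1}^{k}a_{m-1}=k^{1-\alpha}$ (this is precisely Lemma \ref{L-sum of b_i}); for $1\le j\le k-1$ each index $m-j$ lies in $\{1,\dots,m-1\}$, so the column telescopes, $\sum_{i=1}^{k-j}(a_{i-1}-a_i)=1-a_{k-j}$. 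Substituting, and using $\sum_{j=1}^{k-1}f_j-\sum_{m=1}^{k}f_m=-f_k$, we get
\[
\sum_{m=1}^{k}\bigl(F(\overline{u}^{m-1}_\tau)-F(u^m_\tau)\bigr)\le k^{1-\alpha}f_0-f_k-\sum_{j=1}^{k-1}a_{k-j}f_j\le k^{1-\alpha}F(u_0),
\]
the last step because $f_j\ge 0$ and $a_j>0$; since $k\le N=T/\tau$ and $1-\alpha>0$ this is at most $(T/\tau)^{1-\alpha}F(u_0)$, as claimed. (Equivalently one can skip the interchange by setting $g_m:=\sum_{\ell=0}^{m-1}a_\ell f_{m-\ell}$, checking $\sum_{i=1}^{m}(-b^{(m)}_i)f_{m-i}=g_{m-1}-g_m+f_m+a_{m-1}f_0$, and telescoping in $m$ using $g_0=0$ and $g_k\ge 0$.)

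The only delicate point — hence the main obstacle — is the bookkeeping in the interchanged sum: one must keep straight which of the three cases in the definition of $b^{(m)}_i$ is active for each pair $(m,j)$ (the $j=0$ column meets $i=m$, the columns $j\ge 1$ stay within $1\le i\le m-1$) and then recognize the telescoping $\sum_i(a_{i-1}-a_i)$. The two positivity facts used — that all combination coefficients $-b^{(m)}_i$ are $\ge 0$ (so Jensen's inequality applies) and that $a_j>0$ (so the leftover terms have the correct sign) — follow in one line from the concavity and monotonicity of $t\mapsto t^{1-\alpha}$, but deserve to be stated explicitly.
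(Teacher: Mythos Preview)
Your proof is correct and follows essentially the same route as the paper: apply convexity of $F$ to the convex combination $\overline{u}^{i-1}_\tau$, sum over $i$, interchange the order of summation, and use Lemma~\ref{L-sum of b_i} together with the sign of the remaining coefficients. Your introduction of $a_j=(j+1)^{1-\alpha}-j^{1-\alpha}$ makes the column evaluation $\sum_{m=j+1}^{k}(-b^{(m)}_{m-j})=1-a_{k-j}$ explicit, whereas the paper simply asserts the equivalent inequality $\sum_{j=1}^{k-i}(-b_j^{(j+i)})\le 1$; otherwise the arguments coincide.
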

\begin{proof}
For every $1\leq i\leq k$, by the definition of $\overline{u}^{i-1}$ and the convexity of $F$ one has \begin{align*}
F(\overline{u}^{i-1}_\tau)\leq \sum_{j=0}^{i-1}(-b^{(i)}_{i-j})F(u^j_\tau), 
\end{align*}
since $\sum_{j=0}^{i-1}(-b^{(i)}_{i-j})=1$ (Lemma \ref{L-sum of b_i}). Therefore,
\begin{align*}
\sum_{i=1}^k\left(F(\overline{u}^{i-1}_\tau)-F(u^i_\tau)\right)
& \leq \sum_{i=1}^k\sum_{j=0}^{i-1}(-b^{(i)}_{i-j})F(u^j_\tau)-\sum_{i=1}^k F(u^i_\tau) \\
& = \sum_{i=1}^k(-b_i^{(i)})F(u_0)+\sum_{i=1}^{k-1}\left(\sum_{j=1}^{k-i}(-b_j^{(j+i)})-1\right)F(u^i_\tau)-F(u^k_\tau).
\end{align*}
Using Lemma \ref{L-sum of b_i}, we get that $\sum_{i=1}^k(-b_i^{(i)})=k^{1-\alpha}$ and notice that $\sum_{j=1}^{k-i}(-b_j^{(j+i)})\leq 1$ for every $i\in\{2,\ldots,k-1\}$. Hence, we obtain that \begin{align*}
\sum_{i=1}^k\left( F(\overline{u}^{i-1}_\tau)-F(u^i_\tau)\right) & \leq k^{1-\alpha} F(u_0)-F(u^k_\tau)\\
& \leq N^{1-\alpha} F(u_0)\\
&=\bigg(\frac{T}{\tau}\bigg)^{1-\alpha} F(u_0).
\end{align*}
Thus, we get the result.
\end{proof}

\begin{lemma}
\label{L-convergence of u}
Let $u_0\in \cP(\R^d)\cap (L^2\cap \dot{H}^{-s})(\R^d)$ and $\{u^k_\tau\}_{k\in\N}$ be the solution of \eqref{JKO schemes}. Then for every $\tau>0$ and every $k\in \N$ one has \begin{align} \label{F-convergence of u}
			\Vert u^k_\tau\Vert^2_{\dot{H}^{1-s}(\R^d)}\leq\frac{C_\alpha}{\tau^\alpha}\left( \mathbf{U}(\overline{u}^{k-1}_\tau)-\mathbf{U}(u^k_\tau)\right).
		\end{align}
Moreover, for every $T>0$ we have  $\hat{u}_{\tau}\to u$ weakly in $L^2((0,T); \dot{H}^{1-s}(\R^d))$ as $\tau\to 0$ for some $u:[0,\infty)\to \cP(\R^d)$.		
\end{lemma}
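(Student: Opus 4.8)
The plan is to obtain the pointwise estimate \eqref{F-convergence of u} by interchanging the JKO minimiser $u^k_\tau$ with the heat semigroup $\mathbf{H}_h$, and then to sum \eqref{F-convergence of u} over a uniform time grid and control the resulting (telescoping-type) sum by Lemma \ref{L-estimate of u bar} applied with the convex functional $F=\mathbf{U}$. This produces a bound on $\hat u_\tau$ in $L^2((0,T);\dot H^{1-s}(\R^d))$ that is independent of $\tau$, after which weak compactness of this Hilbert space and a diagonal argument over $T\in\N$ deliver the limit $u$.

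For \eqref{F-convergence of u}, fix $\tau>0$ and $k\in\N$. First, $\overline{u}^{k-1}_\tau\in\cP(\R^d)$: its coefficients $-b^{(k)}_{k-i}$ sum to $1$ by Lemma \ref{L-sum of b_i} and are nonnegative (by concavity of $x\mapsto x^{1-\alpha}$ for the indices $1,\dots,k-1$, and directly for the index $k$). Since $U''=1/\bm$ with $\inf\bm>0$ and $U(0)=U'(0)=0$, every density $v\in L^2(\R^d)$ satisfies $\mathbf{U}(v)\le\Vert v\Vert^2_{L^2}/(2\inf\bm)$, so $\mathbf{U}(u_0)<\infty$; granting (see the closing remark) that this finiteness propagates along the scheme, all energies $\mathbf{U}(u^k_\tau)$, $\mathbf{U}(\overline{u}^{k-1}_\tau)$ below are finite, and $\mathbf{W}_{\bm}(u^k_\tau,\overline{u}^{k-1}_\tau)<\infty$ because the minimal value of \eqref{JKO schemes} is finite (test with $u=\overline{u}^{k-1}_\tau$). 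Comparing $u^k_\tau$ with the competitor $\mathbf{H}_h u^k_\tau$ in \eqref{JKO schemes} and rearranging gives, for every $h>0$,
\[
\frac{\Vert u^k_\tau\Vert^2_{\dot H^{-s}}-\Vert\mathbf{H}_h u^k_\tau\Vert^2_{\dot H^{-s}}}{2h}\ \le\ \frac{C_\alpha}{\tau^\alpha}\cdot\frac{\mathbf{W}^2_{\bm}(\mathbf{H}_h u^k_\tau,\overline{u}^{k-1}_\tau)-\mathbf{W}^2_{\bm}(u^k_\tau,\overline{u}^{k-1}_\tau)}{2h}.
\]
Because $\widehat{\mathbf{H}_h u^k_\tau}=e^{-h|\xi|^2}\widehat{u^k_\tau}$ and $(1-e^{-2h|\xi|^2})/(2h)\uparrow|\xi|^2$ as $h\downarrow 0$, monotone convergence shows the left side increases to $\Vert u^k_\tau\Vert^2_{\dot H^{1-s}}$; the $\limsup$ of the right side is at most $\tfrac{C_\alpha}{\tau^\alpha}\big(\mathbf{U}(\overline{u}^{k-1}_\tau)-\mathbf{U}(u^k_\tau)\big)$ by Lemma \ref{Eulerian calculus}(\ref{F-convergence of solution W_m}) with $\xi=u^k_\tau$, $\mu=\overline{u}^{k-1}_\tau$. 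Combining gives \eqref{F-convergence of u}, and in particular $u^k_\tau\in\dot H^{1-s}(\R^d)$.

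For the convergence, fix $T>0$, set $\tau=T/N$, and use $\hat u_\tau\equiv u^k_\tau$ on $((k-1)\tau,k\tau]$. As $U$ is convex, $\mathbf{U}$ is convex on $\cP(\R^d)$, so Lemma \ref{L-estimate of u bar} with $F=\mathbf{U}$ and \eqref{F-convergence of u} give
\[
\int_0^T\Vert\hat u_\tau(t)\Vert^2_{\dot H^{1-s}}\,dt=\tau\sum_{k=1}^N\Vert u^k_\tau\Vert^2_{\dot H^{1-s}}\le C_\alpha\tau^{1-\alpha}\sum_{k=1}^N\big(\mathbf{U}(\overline{u}^{k-1}_\tau)-\mathbf{U}(u^k_\tau)\big)\le C_\alpha\tau^{1-\alpha}\Big(\tfrac{T}{\tau}\Big)^{1-\alpha}\mathbf{U}(u_0)=C_\alpha T^{1-\alpha}\mathbf{U}(u_0),
\]
and $\mathbf{U}(u_0)\le\Vert u_0\Vert^2_{L^2}/(2\inf\bm)<\infty$. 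Hence $\{\hat u_\tau\}_{\tau>0}$ is bounded in $L^2((0,T);\dot H^{1-s}(\R^d))$ uniformly in $\tau$. Running this for $T=1,2,\dots$, extracting weakly convergent subsequences and diagonalising (weak convergence on $(0,T+1)$ restricts to weak convergence on $(0,T)$) gives a single $u$ with $\hat u_\tau$ converging weakly to $u$ in $L^2((0,T);\dot H^{1-s}(\R^d))$ for every $T$; that $u(t)\in\cP(\R^d)$ for a.e.\ $t$ then follows from conservation of mass along the scheme and the weak closedness of the cone of nonnegative densities.

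The point that I expect to require the most care is making the flow interchange above rigorous: since $\mathbf{U}(u^k_\tau)$ is not known finite a priori, one should really carry out the comparison against the heat-mollified competitor $\mathbf{H}_\epsilon u^k_\tau\in L^2(\R^d)$ and let $\epsilon\to0$, exploiting the monotonicity of $\epsilon\mapsto\mathbf{U}(\mathbf{H}_\epsilon u^k_\tau)$ and the lower semicontinuity of $\mathbf{U}$; this simultaneously drives the induction that keeps all $\mathbf{U}$-energies finite. A second delicate point is the $\tau$-uniformity of the final bound: for a fixed weight $\bm$ it is immediate from $u_0\in L^2(\R^d)$, whereas for the $\tau$-dependent weights used in Theorems \ref{main result}--\ref{main result2} it rests on the explicit form $U(z)=\int_0^z(z-t)\bm(t)^{-1}\,dt$ (which gives, e.g., $U(z)\le C_\beta z^{2-\beta}$ when $\beta<1$, with $C_\beta$ independent of $\tau$).
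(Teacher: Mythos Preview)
Your argument is correct and follows essentially the same route as the paper: compare $u^k_\tau$ with the heat-flow competitor $\mathbf{H}_h u^k_\tau$ in the scheme \eqref{JKO schemes}, invoke Lemma \ref{Eulerian calculus}(\ref{F-convergence of solution W_m}) for the distance term and compute the $h$-derivative of $\Vert\mathbf{H}_h u^k_\tau\Vert^2_{\dot H^{-s}}$ to obtain \eqref{F-convergence of u}, then sum via Lemma \ref{L-estimate of u bar} with $F=\mathbf{U}$ and conclude by weak compactness in $L^2((0,T);\dot H^{1-s})$. Your closing remarks on the a priori finiteness of $\mathbf{U}(u^k_\tau)$ and on the $\tau$-uniformity of $\mathbf{U}(u_0)$ for the specific $\tau$-dependent weights are careful observations that the paper leaves implicit.
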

\begin{proof}
We recall that $\mathbf{H}_t$ is the semigroup with respect to the heat equation $u_t=\Delta u$ in $\R^d$ with initial data $u_0$, then by the scheme \eqref{JKO schemes} we get that 
		\begin{align*}
			\frac{C_\alpha}{2\tau^\alpha}\mathbf{W}^2_\mathbf{m}(u^k_\tau,\overline{u}^{k-1}_\tau)+\frac{1}{2}\Vert u^k_\tau\Vert^2_{\dot{H}^{-s}(\R^d)}\leq\frac{C_\alpha}{2\tau^\alpha}\mathbf{W}^2_\mathbf{m}(\mathbf{H}_hu^k_\tau,\overline{u}^{k-1}_\tau)+\frac{1}{2}\Vert \mathbf{H}_hu^k_\tau\Vert^2_{\dot{H}^{-s}(\R^d)}.
		\end{align*}
		
	Next, we will calculate the derivative of $ \Vert \mathbf{H}_hu^k_\tau\Vert^2_{\dot{H}^{-s}(\R^d)} $ at $ h=0 $.
		\begin{align}\label{F-convergence of solution derivative of H}
			\partial_h\big|_{h=0}\Vert \mathbf{H}_hu^k_\tau\Vert^2_{\dot{H}^{-s}(\R^d)}=&2\int_{\R^d}|\xi|^{-2s}\overline{\widehat{u^k_\tau}}\partial_h\big|_{h=0}\widehat{\mathbf{H}_hu^k_\tau}d\xi
			\notag\\=&2\int_{\R^d}\overline{\widehat{(-\Delta)^{-s}u^k_\tau}}\partial_h\bigg|_{h=0}\left(\int_{\R^d}e^{-ix\xi}\mathbf{H}_hu^k_\tau dx\right)d\xi\notag\\
			=&2\int_{\R^d}\overline{\widehat{(-\Delta)^{-s}u^k_\tau}}\widehat{\Delta u^k_\tau} d\xi\notag\\
			=&-2\Vert u^k_\tau\Vert^2_{\dot{H}^{1-s}(\R^d)}.
		\end{align}
		Combining Lemma \ref{Eulerian calculus} \eqref{F-convergence of solution W_m} and \eqref{F-convergence of solution derivative of H} we get (\ref{F-convergence of u}).
		
		 Now, for each $T>0$ we consider a uniform partition of the interval $[0,T]$ as $\tau=\frac{T}{N}$ for $N\in \N$, we will prove that \begin{align}\label{F-bounded of U 1}
\int_0^{k\tau}\Vert \hat{u}_{\tau}(t)\Vert^2_{\dot{H}^{1-s}}dt \leq C_\alpha T^{1-\alpha} \mathbf{U}(u_0).
\end{align}
Indeed, as $\mathbf{U}$ is convex, by using Lemma \ref{L-estimate of u bar}, one gets that 
\begin{align} \label{F-bounded of U 2}
\sum_{i=1}^k\left( \mathbf{U}(\overline{u}^{i-1}_\tau)-\mathbf{U}(u^i_\tau)\right) \leq \bigg(\frac{T}{\tau}\bigg)^{1-\alpha} \mathbf{U}(u_0).
\end{align}
So, the inequality (\ref{F-bounded of U 1}) follows (\ref{F-convergence of u}) and (\ref{F-bounded of U 2}). Thus, $\int_0^T\|\hat{u}_{\tau}(t)\|^2_{\dot{H}^{1-s}}dt$ is bounded. This implies that there exists $u:[0,\infty)\to \cP(\R^d)$ such that $\hat{u}_{\tau}$ converges weakly to $u$ in $L^2((0,T); \dot{H}^{1-s}(\R^d))$ as $\tau\to 0$.
\end{proof}

\begin{lemma}\label{L-estimate of solution}
Let $u_0\in \cP(\R^d)\cap (L^2\cap \dot{H}^{-s})(\R^d)$ and $\{u^k_\tau\}_{k\in\N}$ be the solution of \eqref{JKO schemes}. Then for every $\tau, T>0$, every $\varphi\in C_c^\infty(\R^d)$, and every $\psi\in C^\infty([0,T],\R_+)$ with $\psi(T)=0$ we get that \begin{enumerate}
	\item  there exists a constant $C(\psi,\varphi)>0$ such that \begin{align*}
&\int_{0}^{T}\psi(t)\langle \textup{div}\left(\left(\hat{u}_{\tau}(t)+\tau^{\frac{\alpha}{4(2-\beta)}}\right)^{\beta}(\nabla(-\Delta)^{-s}\hat{u}_{\tau}(t)\right),\varphi\rangle dt\\
\leq& C(\psi,\varphi) \tau^{\frac{\alpha\beta}{4(2-\beta)}}+\int_{0}^{T}\psi(t)\langle \textup{div}\left(\hat{u}_{\tau}^{\beta}(t)(\nabla(-\Delta)^{-s}\hat{u}_{\tau}(t)\right),\varphi\rangle dt.
\end{align*}
         \item for $\tau>0$ is small enough, there exists a constant $R(\psi,\varphi)>0$ such that
         \begin{align*}
&\int_{0}^{T}\psi(t)\langle \textup{div}\left((\hat{u}_{\tau}(t)+1)^{\tau^{1-\alpha/4}}(\nabla(-\Delta)^{-s}\hat{u}_{\tau}(t)\right),\varphi\rangle dt\\
\leq& \tau^{1-\alpha/4}R(\psi,\varphi)-\int^{T}_0\psi(t)\langle (-\Delta)^{1-s}\hat{u}_{\tau}(t),\varphi\rangle dt.
\end{align*}

\end{enumerate}\end{lemma}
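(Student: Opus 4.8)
Both parts share the same structure, so I would first isolate a common reduction and then only the pointwise comparison of weights changes. Write $K:=\supp\nabla\varphi$ (a compact set) and recall that, by definition, $\langle\textup{div}(F\nabla(-\Delta)^{-s}v),\varphi\rangle=-\int_{\R^d}F\,\nabla(-\Delta)^{-s}v\cdot\nabla\varphi\,dx$. Two facts will be used repeatedly. By Plancherel $\|\nabla(-\Delta)^{-s}v\|_{\dot{H}^{s}(\R^d)}=\|v\|_{\dot{H}^{1-s}(\R^d)}$, so the fractional Sobolev inequality \eqref{Sobolev ineq} gives $\nabla(-\Delta)^{-s}\hat u_\tau(t)\in L^{2d/(d-2s)}(\R^d)$ with norm $\le S_{d,s}\|\hat u_\tau(t)\|_{\dot{H}^{1-s}}$, and likewise $\hat u_\tau(t)\in L^{2d/(d-2+2s)}(\R^d)$ with norm $\le S_{d,1-s}\|\hat u_\tau(t)\|_{\dot{H}^{1-s}}$. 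These $\dot{H}^{1-s}$-norms are controlled in time by $\int_0^T\|\hat u_\tau(t)\|^2_{\dot{H}^{1-s}}\,dt\le C_\alpha T^{1-\alpha}\mathbf{U}(u_0)$ (Lemma \ref{L-convergence of u}), and for both weights $\mathbf{U}(u_0)$ stays controlled uniformly for $\tau\le1$ (one checks $U(z)\lesssim z+z^{2-\beta}$ for the weight of part (1), and $U(z)\lesssim z^2+z$ for that of part (2)).

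For part (1), put $\varepsilon:=\tau^{\alpha/(4(2-\beta))}$. Integration by parts turns the left-hand side minus the integral term on the right into
\[
-\int_0^T\psi(t)\int_{\R^d}\bigl[(\hat u_\tau(t)+\varepsilon)^\beta-\hat u_\tau^\beta(t)\bigr]\,\nabla(-\Delta)^{-s}\hat u_\tau(t)\cdot\nabla\varphi\,dx\,dt,
\]
which is meaningful because the integrand is absolutely integrable on $[0,T]\times K$ (use $\hat u_\tau^\beta\le1+\hat u_\tau$ and the Sobolev bounds above). Since $0<\beta\le1$, subadditivity of $z\mapsto z^\beta$ gives the pointwise bound $0\le(\hat u_\tau+\varepsilon)^\beta-\hat u_\tau^\beta\le\varepsilon^\beta=\tau^{\alpha\beta/(4(2-\beta))}$. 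Bounding $|\nabla\varphi|$ by $\|\nabla\varphi\|_{L^\infty}$ on $K$ and applying Hölder on $K$ with exponents $\tfrac{2d}{d+2s}$, $\tfrac{2d}{d-2s}$ together with the first Sobolev estimate, the absolute value of the inner space integral is $\le\tau^{\alpha\beta/(4(2-\beta))}\|\nabla\varphi\|_{L^\infty}|K|^{(d+2s)/(2d)}S_{d,s}\|\hat u_\tau(t)\|_{\dot{H}^{1-s}}$. Multiplying by $\|\psi\|_{L^\infty}$, integrating in $t$, and using Cauchy--Schwarz with the energy bound ($\int_0^T\|\hat u_\tau\|_{\dot{H}^{1-s}}\,dt\le T^{1/2}(C_\alpha T^{1-\alpha}\mathbf{U}(u_0))^{1/2}$) gives the claimed inequality, the constant $C(\psi,\varphi)$ absorbing $\|\nabla\varphi\|_{L^\infty}$, $|K|$, $\|\psi\|_{L^\infty}$, $T$ and $\mathbf{U}(u_0)$.

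For part (2), put $\theta:=\tau^{1-\alpha/4}$ and note $\langle\textup{div}(1\cdot\nabla(-\Delta)^{-s}v),\varphi\rangle=-\langle(-\Delta)^{1-s}v,\varphi\rangle$, so after integration by parts the left-hand side minus $-\int_0^T\psi\langle(-\Delta)^{1-s}\hat u_\tau,\varphi\rangle\,dt$ becomes
\[
-\int_0^T\psi(t)\int_{\R^d}\bigl[(\hat u_\tau(t)+1)^\theta-1\bigr]\,\nabla(-\Delta)^{-s}\hat u_\tau(t)\cdot\nabla\varphi\,dx\,dt.
\]
Now $(z+1)^\theta-1$ is not uniformly $O(\theta)$ in $z$; but writing $(z+1)^\theta-1=e^{\theta\log(z+1)}-1\le\theta\log(z+1)(z+1)^\theta$ (from $e^x-1\le xe^x$) and using $\sup_{w\ge1}(\log w)w^{\theta-1}<1$ when $\theta\le\tfrac12$, one obtains $0\le(z+1)^\theta-1\le\theta(1+z)$ for all $z\ge0$ once $\tau$ is small. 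This splits the inner integral into $\int_K|\nabla(-\Delta)^{-s}\hat u_\tau|\,dx$, estimated exactly as in part (1), and $\int_K\hat u_\tau\,|\nabla(-\Delta)^{-s}\hat u_\tau|\,dx$, which by Hölder on $K$ with exponents $\tfrac{2d}{d-2+2s}$, $\tfrac{2d}{d+2-2s}$ and the two Sobolev bounds is $\le S_{d,1-s}S_{d,s}|K|^{1/d}\|\hat u_\tau(t)\|^2_{\dot{H}^{1-s}}$. Multiplying by $\theta\|\psi\|_{L^\infty}$, integrating in $t$ and invoking $\int_0^T\|\hat u_\tau\|^2_{\dot{H}^{1-s}}\le C_\alpha T^{1-\alpha}\mathbf{U}(u_0)$ (and Cauchy--Schwarz for the first piece) yields $\tau^{1-\alpha/4}R(\psi,\varphi)$.

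The genuinely delicate step is the pointwise comparison in part (2): the obvious estimate only gives $(z+1)^\theta-1\le\theta\log(1+z)$, whose right-hand side is unbounded in $z$, so one must trade the logarithm for a linear factor via the boundedness of $(\log w)w^{\theta-1}$ on $[1,\infty)$. The other thing to watch is that the time-integrated $\dot{H}^{1-s}$ bound of Lemma \ref{L-convergence of u} stays controlled as $\tau\to0$ even though $\bm$, and hence $U$ and $\mathbf{U}(u_0)$, depends on $\tau$; beyond these, the proof is just Hölder's inequality on the compact set $K$ and the fractional Sobolev inequality \eqref{Sobolev ineq}.
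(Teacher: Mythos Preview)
Your argument is correct and follows the same two-step template as the paper --- first a pointwise comparison of the two mobility weights, then an integral estimate on the compact set $K=\supp\nabla\varphi$ --- but the integral step is handled differently. The paper controls $\int_K|\nabla(-\Delta)^{-s}\hat u_\tau|\,dx$ by Cauchy--Schwarz, observes that $\|\nabla(-\Delta)^{-s}\hat u_\tau\|_{L^2}=\|\hat u_\tau\|_{\dot H^{1-2s}}$, and then interpolates between $\dot H^{-s}$ and $\dot H^{1-s}$; this uses both the $\dot H^{-s}$ bound coming from the scheme and the $\dot H^{1-s}$ bound of Lemma~\ref{L-convergence of u}. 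You instead identify $\nabla(-\Delta)^{-s}\hat u_\tau$ as an element of $\dot H^{s}$ and pass through the Sobolev embedding $\dot H^{s}\hookrightarrow L^{2d/(d-2s)}$, which only needs the $\dot H^{1-s}$ control. For part~(2), the paper bounds $(z+1)^{\theta}\log(z+1)\le 2(z+1)^{1/2}$ via $\log(r+1)\le 2r^{1/3}$, then splits with AM--GM; your bound $(z+1)^{\theta}-1\le\theta(z+1)$ (from $\sup_{w\ge1}(\log w)w^{\theta-1}\le 2/e<1$) is cleaner and leads directly to a H\"older/Sobolev estimate of $\int_K\hat u_\tau|\nabla(-\Delta)^{-s}\hat u_\tau|\,dx$ by the quadratic $\dot H^{1-s}$ energy. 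One small caveat: your claim that $\mathbf U(u_0)$ stays uniformly bounded for the weight of part~(1) is not quite right at $\beta=1$, where $U(z)$ contains a term of order $z\,|\!\log\varepsilon|$ with $\varepsilon=\tau^{\alpha/4}$; however this only introduces a factor $|\log\tau|^{1/2}$ into $C(\psi,\varphi)$, which is still annihilated by $\tau^{\alpha\beta/(4(2-\beta))}$ in the application, so the conclusion is unaffected.
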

\begin{proof}
$1.$ For every $r>0$, one has the function $q(a)=(a+r)^{\beta}-a^{\beta}$ is non-increasing on $(0;+\infty)$. Therefore, we get that
$$0\leq (a+r)^{\beta}-a^{\beta}\leq r^{\beta},\text{ for every }a,r>0.$$
From this inequality we have
\begin{align*}
			&\int_{0}^{T}\psi(t)\langle \textup{div}\left(\left(\hat{u}_{\tau}(t)+\tau^{\frac{\alpha}{4(2-\beta)}}\right)^{\beta}\nabla(-\Delta)^{-s}\hat{u}_{\tau}(t)\right),\varphi\rangle dt\\
			\leq&\int_{0}^{T}\psi(t)\langle \textup{div}\left(\hat{u}_{\tau}^{\beta}(t)(\nabla(-\Delta)^{-s}\hat{u}_{\tau}(t)\right),\varphi\rangle dt\\
			&+\int_{0}^{T}\int_{\R^d}\psi(t) \left\vert \left(\hat{u}_{\tau}(t)+\tau^{\frac{\alpha}{4(2-\beta)}}\right)^{\beta}-\hat{u}^{\beta}_{\tau}(t)\right\vert\vert\nabla(-\Delta)^{-s}\hat{u}_{\tau}(t)\vert \vert \nabla \varphi\vert dxdt\\
			\leq & \int_{0}^{T}\psi(t)\langle \textup{div}\left(\hat{u}_{\tau}^{\beta}(t)(\nabla(-\Delta)^{-s}\hat{u}_{\tau}(t)\right),\varphi\rangle dt + \tau^{\frac{\alpha\beta}{4(2-\beta)}}\int_{0}^{T}\int_{\R^d}\psi(t) \vert\nabla(-\Delta)^{-s}\hat{u}_{\tau}(t)\vert \vert \nabla \varphi\vert dxdt.		\end{align*}
Since $\varphi\in C^\infty_c(\R^d)$ and $\psi\in C^\infty ([0,T],\R_+)$ one gets that there exists a constant $C_1(\psi,\varphi)>0$ such that
		\begin{align*}
			\int_0^T\int_{\R^d}\psi(t) \vert\nabla(-\Delta)^{-s}\hat{u}_{\tau}(t)\vert \vert \nabla \varphi\vert dxdt
			&\leq C_1(\psi,\varphi)\int_0^T\bigg(\int_{\R^d}\vert \nabla(-\Delta)^{-s}\hat{u}_{\tau}(t)\vert^2 dx\bigg)^{1/2}dt\\
			& =C_1(\psi,\varphi)\int_0^T\bigg(\int_{\R^d}(-\Delta)^{1-s}\hat{u}_{\tau}(t)(-\Delta)^{-s}\hat{u}_{\tau}(t)dx\bigg)^{1/2}dt\\
			&= C_1(\psi,\varphi)\int_0^T \Vert \hat{u}_{\tau}(t)\Vert_{\dot{H}^{1-2s}(\R^d)}dt.
		\end{align*} 
By interpolation inequality we get that \begin{align*}
\Vert \hat{u}_{\tau}(t)\Vert_{\dot{H}^{1-2s}(\R^d)}\leq  \Vert \hat{u}_{\tau}(t)\Vert_{\dot{H}^{-s}(\R^d)}^s\Vert \hat{u}_{\tau}(t)\Vert_{\dot{H}^{1-s}(\R^d)}^{1-s}.
\end{align*}
Since $\Vert \hat{u}_{\tau}(t)\Vert_{\dot{H}^{1-s}(\R^d)}$ and $\Vert \hat{u}_{\tau}(t)\Vert_{\dot{H}^{-s}(\R^d)}$	are bounded, there exists $C(\psi,\varphi)>0$ such that \begin{align*}
\int_0^T\int_{\R^d}\psi(t) \vert\nabla(-\Delta)^{-s}\hat{u}_{\tau}(t)\vert \vert \nabla \varphi\vert dxdt\leq C(\psi,\varphi).
\end{align*}	
Hence, we get the result.\\\\
$2.$ Since the function $w(a)=r(a+1)^r\log(a+1)-(a+1)^r+1$ is non-decreasing on $(0;+\infty)$ for every $r>0$ one gets that $$0\leq (a+1)^r-1\leq r(a+1)^r\log(a+1),\text{ for every }a,r>0.$$
So that
\begin{align*}
			&\int_{0}^{T}\psi(t)\langle \text{div}\left((\hat{u}_{\tau}(t)+1)^{\tau^{1-\alpha/4}}(\nabla(-\Delta)^{-s}\hat{u}_{\tau}(t)\right),\varphi\rangle dt\\
			\leq&\tau^{1-\alpha/4} \int_{0}^{T}\int_{\R^d}\psi(t)(\hat{u}_{\tau}(t)+1)^{\tau^{1-\alpha/4}}\log(\hat{u}_{\tau}(t)+1)\vert \nabla(-\Delta)^{-s}\hat{u}_{\tau}(t)\vert \vert \nabla \varphi\vert dx dt\\
			&-\int_{0}^{T}\psi(t)\langle (-\Delta)^{1-s}\hat{u}_{\tau}(t),\varphi\rangle dt.
		\end{align*}
Similarly as above, there exists a constant $R_1(\psi,\varphi)>0$ such that for $\tau>0$ is small enough,
		\begin{align*}
			&\int_{0}^{T}\int_{\R^d}\psi(t)(\hat{u}_{\tau}(t)+1)^{\tau^{1-\alpha/4}}\log(\hat{u}_{\tau}(t)+1)\vert \nabla(-\Delta)^{-s}\hat{u}_{\tau}(t)\vert \vert \nabla \varphi\vert dx dt\\
			&\leq R_1(\psi,\varphi)\int_0^T\int_{\text{supp}\varphi}(\hat{u}_{\tau}(t)+1)^{\tau^{1-\alpha/4}}\log(\hat{u}_{\tau}(t)+1)\vert \nabla(-\Delta)^{-s}\hat{u}_{\tau}(t)\vert dxdt.
		\end{align*}
		Furthermore, from the inequality $\log(r+1)\leq 2r^{1/3}$ for every $r\geq 0$ we also obtain that for $\tau>0$ is small enough, \begin{align*}
			(\hat{u}_{\tau}(t)+1)^{\tau^{1-\alpha/4}}\log(\hat{u}_{\tau}(t)+1)&\leq 2(\hat{u}_{\tau}(t)+1)^{\tau^{1-\alpha/4}+1/3}\leq 2(\hat{u}_{\tau}(t)+1)^{1/2}. 
		\end{align*}
		Therefore, observe that $\psi\in C^\infty([0,T],\R_+)$  and $\varphi\in C_c^\infty(\R^d)$ we get that there exists constants $R_2(\psi,\varphi),R(\psi,\varphi)>0$ such that for $\tau>0$ is small enough,
		\begin{align*}
			&\int_{0}^{T}\int_{\R^d}\psi(t)(\hat{u}_{\tau}(t)+1)^{\tau^{1-\alpha/4}}\log(\hat{u}_{\tau}(t)+1)\vert \nabla(-\Delta)^{-s}\hat{u}_{\tau}(t)\vert \vert \nabla \varphi\vert dx dt\\
			\leq &R_2(\psi,\varphi)\int_0^T\int_{\text{supp}\varphi}(\hat{u}_{\tau}(t)+1)dxdt+R_2(\psi,\varphi)\int_0^T\int_{\R^d}\vert \nabla(-\Delta)^{-s}\hat{u}_{\tau}(t)\vert^2 dxdt\\
			=& R_2(\psi,\varphi)\int_0^T\int_{\text{supp}\varphi}(\hat{u}_{\tau}(t)+1)dxdt+R_2(\psi,\varphi)\int_0^T \Vert \hat{u}_{\tau}(t)\Vert_{\dot{H}^{1-2s}(\R^d)}^2dt\\
			\leq& R(\psi,\varphi).
		\end{align*} 
Thus, we get the result.				 				
\end{proof}

Next, we establish several estimates for our regularizing effect of the interpolation approximation $\hat{u}_{\tau}(t)$. Let $g\in C^2([0,\infty),\R_+)$ be a convex function such that $g(0)=g'(0)=g''(0)=0$. For any $\delta>0$, we define $\mathbf{U}_\delta:\cP_2(\R^d)\to (-\infty,+\infty]$ by $$\mathbf{U}_\delta(u):=\int_{\R^d}g(u(x))dx+\delta\mathbf{U}(u).$$

We define the semigroup $\mathbf{K}_\delta$ by $\mathbf{K}_{\delta,t}v_0=v_t$ for every $t>0$ with $v_t$ is the unique solution of the following equation with initial data $v_0\in \cP(\R^d)$
	$$\partial_tv_t-\Delta \mathbf{G}(v_t)-\delta\Delta v_t=0 \text{ in }(0,+\infty)\times \R^d,$$
	where $\mathbf{G}(r)=\int_0^r\bm(z)g''(z)dz$.
	\begin{lemma}\cite[the inequality (4.11)]{CN} \label{L-flows of modified equations for U_delta}
		For every $\mu,\xi\in\cP(\R^d)$ such that $\mathbf{U}_\delta(\mu),\mathbf{U}_\delta(\xi)<\infty$ and $\mathbf{W}_\mathbf{m}(\mu,\xi)<\infty$ we have $$\limsup_{h\to 0}\dfrac{\mathbf{W}^2_\mathbf{m}(\mathbf{K}_{\delta,h}(\xi),\mu)-\mathbf{W}^2_\mathbf{m}(\xi,\mu)}{2h}\leq \mathbf{U}_\delta (\mu)-\mathbf{U}_\delta(\xi).$$
	\end{lemma}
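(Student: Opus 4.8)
This is \cite[inequality (4.11)]{CN}. It is stated there for the weights $\bm(z)=(z+\tau^{1/10})^\alpha$ and $\bm(z)=(z+1)^{\tau^{1/10}}$, but exactly as in the Remark following Lemma \ref{Eulerian calculus} the argument uses nothing about $\bm$ beyond $\bm\in C^\infty(\R_+,\R_+)$ with $\inf\bm>0$ and $\bm''\le 0$, so it applies to the present weights. We only indicate the structure. Since $\mathbf{G}'=\bm\,g''$ and $U''=1/\bm$, the equation defining $\mathbf{K}_\delta$ can be rewritten as $\partial_tv=\operatorname{div}\!\big(\bm(v)\nabla(g'(v)+\delta U'(v))\big)$, and $g'+\delta U'$ is the first variation of $\mathbf{U}_\delta$; thus $\mathbf{K}_\delta$ is the $\mathbf{W}_\bm$-gradient flow of $\mathbf{U}_\delta$, and the asserted inequality is the flow-interchange (EVI-type) estimate for it with dissipation constant $0$. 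It generalizes the heat-flow estimate of Lemma \ref{Eulerian calculus}(\ref{F-convergence of solution W_m}), which is the case $g\equiv 0$, $\delta=1$.

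The proof follows that of Lemma \ref{Eulerian calculus}(\ref{F-convergence of solution W_m}), with $\mathbf{H}$ replaced by $\mathbf{K}_\delta$ and $\mathbf{U}$ by $\mathbf{U}_\delta$. Assume $\mathbf{W}_\bm(\xi,\mu)<\infty$, otherwise there is nothing to prove. By Lemma \ref{apro} (together with a mollification of the endpoints so as also to make the $\mathbf{U}_\delta$-values converge) choose $\rho_n\in C^\infty_c([0,1]\times\R^d)$ with $\rho_n(t)\in\cP(\R^d)$ and potentials $\phi_n$ such that $\rho_n(0)\to\mu$, $\rho_n(1)\to\xi$ in $L^1(\R^d)$, $\mathbf{U}_\delta(\rho_n(0))\to\mathbf{U}_\delta(\mu)$, $\mathbf{U}_\delta(\rho_n(1))\to\mathbf{U}_\delta(\xi)$, $\partial_t\rho_n=-\operatorname{div}(\bm(\rho_n)\nabla\phi_n)$, and $\int_0^1\!\!\int_{\R^d}\bm(\rho_n)|\nabla\phi_n|^2\,dx\,dt\to\mathbf{W}_\bm^2(\xi,\mu)$. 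For $h>0$ put $\rho^h_n(t):=\mathbf{K}_{\delta,ht}\rho_n(t)$, so that $\rho^h_n(0)=\rho_n(0)$ and $\rho^h_n(1)=\mathbf{K}_{\delta,h}\rho_n(1)$. Writing the continuity equation $\partial_t\rho^h_n=-\operatorname{div}(\nu^h_n)$, where $\nu^h_n$ is the transported Benamou--Brenier velocity of $\rho_n$ plus the contribution of the generator of $\mathbf{K}_\delta$, and setting $\mathbf{A}^h_n(t):=\int_{\R^d}|\nu^h_n(t)|^2/\bm(\rho^h_n(t))\,dx$, one has $\mathbf{W}_\bm^2(\mathbf{K}_{\delta,h}\rho_n(1),\rho_n(0))\le\int_0^1\mathbf{A}^h_n(t)\,dt$ and $\int_0^1\mathbf{A}^0_n(t)\,dt=\int_0^1\!\!\int_{\R^d}\bm(\rho_n)|\nabla\phi_n|^2$. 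The key step is the Eulerian inequality
\[
\tfrac12\,\partial_h\mathbf{A}^h_n(t)+\partial_t\,\mathbf{U}_\delta\big(\rho^h_n(t)\big)\le 0 ,
\]
the analogue of Lemma \ref{Eulerian calculus}(\ref{L-Eulerian calculus}) with $\mathbf{V}_\delta$ replaced by $\mathbf{U}_\delta$ and with $\lambda_\delta=0$. Integrating it in $t\in[0,1]$ and then in $h$, using that $\mathbf{U}_\delta$ is non-increasing and right-continuous along $\mathbf{K}_\delta$, combining with the two bounds above, letting $n\to\infty$ (lower semicontinuity of $\mathbf{W}_\bm$, continuity of $\mathbf{K}_{\delta,h}$, and convergence of the endpoint energies), and finally dividing by $2h$ and taking $\limsup_{h\to 0}$, one arrives at $\limsup_{h\to 0}\frac{\mathbf{W}_\bm^2(\mathbf{K}_{\delta,h}\xi,\mu)-\mathbf{W}_\bm^2(\xi,\mu)}{2h}\le\mathbf{U}_\delta(\mu)-\mathbf{U}_\delta(\xi)$.

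The main obstacle is the Eulerian differential inequality for the action. One differentiates $\mathbf{A}^h_n(t)$ in $h$, integrates by parts, and invokes the displacement-convexity (Bochner-type) computation carried out for the weighted metric in \cite{CN}: the decisive algebraic fact is that $U''=1/\bm$ is precisely the choice that makes the second-order terms produced by the weight $\bm$ combine with $g''\ge 0$ and with the concavity $\bm''\le 0$ to leave a remainder of the correct sign, which is why the dissipation constant is $0$ here rather than the $\lambda_\delta$ of Lemma \ref{Eulerian calculus}(\ref{L-Eulerian calculus}). The accompanying technical points are routine: the integrations by parts are legitimate because $\mathbf{K}_{\delta,\cdot}$ is uniformly parabolic for $\delta>0$, so $\rho^h_n$ and $\nu^h_n$ are smooth with the needed decay; and the limit $n\to\infty$ requires convergence (not just semicontinuity) of $\mathbf{U}_\delta$ at the endpoints, which is why we mollify in the application of Lemma \ref{apro}.
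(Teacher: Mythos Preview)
Your proposal is correct and follows the same approach as the cited reference \cite{CN}: the paper itself gives no proof here but simply invokes \cite[inequality (4.11)]{CN}, and your sketch reproduces the flow-interchange/Eulerian-calculus argument underlying that result, with the key displacement-convexity computation yielding dissipation constant $0$. Your remark that the argument in \cite{CN} extends verbatim to any $\bm\in C^\infty(\R_+,\R_+)$ with $\inf\bm>0$ and $\bm''\le0$ is exactly the content of the Remark following Lemma \ref{Eulerian calculus}, so nothing further is needed.
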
	
\begin{lemma}\label{L-estimate solution}
		Let $u_0\in \cP_2(\R^d)\cap (L^2\cap \dot{H}^{-s})(\R^d)$ and $\{u^k_\tau\}_{k\in\N}$ be the solution of \eqref{JKO schemes}.  Let $g\in C^2([0,\infty),\R_+)$ be a convex function such that $g(0)=g'(0)=g''(0)=0$.  Then for every $\tau>0$ and every $k\in\N$ we have
		\begin{align*}
		\Vert \mathcal{G}(u^k_\tau)\Vert^2_{\dot{H}^{1-s}(\R^d)}\leq \frac{C_\alpha}{\tau^\alpha}\left(\int_{\R^d} g(\overline{u}^{k-1}_\tau(x))dx-\int_{\R^d} g(u^{k}_\tau(x))dx\right),
		\end{align*} and
		$$ \Vert \mathcal{G}(u^k_\tau)\Vert^2_{L^{\frac{2d}{d-2(1-s)}}(\R^d)}\leq \frac{C_\alpha S_{d,1-s}^2}{\tau^\alpha}\left(\int_{\R^d} g(\overline{u}^{k-1}_\tau(x))dx-\int_{\R^d} g(u^{k}_\tau(x))dx\right),$$
		where $\mathcal{G}(r)=\int_0^r\sqrt{\bm(z)g''(z)}dz$ and $S_{d,1-s}$ is determined as in \eqref{Sobolev ineq}
	\end{lemma}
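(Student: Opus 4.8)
The plan is to mimic the flow-interchange argument already used in Lemma \ref{L-convergence of u}, but now with the semigroup $\mathbf{K}_\delta$ associated to the nonlinear-diffusion operator $\partial_t v - \Delta \mathbf{G}(v) - \delta\Delta v = 0$ in place of the linear heat flow $\mathbf{H}_t$, and with the entropy $\mathbf{U}_\delta$ in place of $\mathbf{U}$. First I would test the minimality of $u^k_\tau$ in \eqref{JKO schemes} against the competitor $\mathbf{K}_{\delta,h}u^k_\tau$: since $u^k_\tau$ is the minimizer and $\overline{u}^{k-1}_\tau$ is fixed, we get
\begin{align*}
\frac{C_\alpha}{2\tau^\alpha}\mathbf{W}^2_\mathbf{m}(u^k_\tau,\overline{u}^{k-1}_\tau)+\frac12\|u^k_\tau\|^2_{\dot{H}^{-s}(\R^d)}
\leq
\frac{C_\alpha}{2\tau^\alpha}\mathbf{W}^2_\mathbf{m}(\mathbf{K}_{\delta,h}u^k_\tau,\overline{u}^{k-1}_\tau)+\frac12\|\mathbf{K}_{\delta,h}u^k_\tau\|^2_{\dot{H}^{-s}(\R^d)},
\end{align*}
which upon rearranging and dividing by $h>0$, then letting $h\to 0^+$, yields
\begin{align*}
\frac{C_\alpha}{2\tau^\alpha}\limsup_{h\to 0}\frac{\mathbf{W}^2_\mathbf{m}(u^k_\tau,\overline{u}^{k-1}_\tau)-\mathbf{W}^2_\mathbf{m}(\mathbf{K}_{\delta,h}u^k_\tau,\overline{u}^{k-1}_\tau)}{h}
\leq \frac12\,\partial_h\big|_{h=0}\|\mathbf{K}_{\delta,h}u^k_\tau\|^2_{\dot{H}^{-s}(\R^d)}.
\end{align*}

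Next I would compute the right-hand side explicitly, exactly as in \eqref{Euler2} and \eqref{F-convergence of solution derivative of H}: differentiating under the Fourier transform and using $\partial_h|_{h=0}\mathbf{K}_{\delta,h}u^k_\tau = \Delta\mathbf{G}(u^k_\tau)+\delta\Delta u^k_\tau$, we obtain
\begin{align*}
\partial_h\big|_{h=0}\|\mathbf{K}_{\delta,h}u^k_\tau\|^2_{\dot{H}^{-s}(\R^d)}
= 2\int_{\R^d}|\xi|^{-2s}\overline{\widehat{u^k_\tau}}\big(\widehat{\Delta\mathbf{G}(u^k_\tau)}+\delta\widehat{\Delta u^k_\tau}\big)d\xi
= -2\langle u^k_\tau,\mathbf{G}(u^k_\tau)\rangle_{1-s} - 2\delta\|u^k_\tau\|^2_{\dot{H}^{1-s}(\R^d)}.
\end{align*}
For the left-hand side, I invoke Lemma \ref{L-flows of modified equations for U_delta} with $\xi = u^k_\tau$ and $\mu = \overline{u}^{k-1}_\tau$, which gives
$\limsup_{h\to 0}\frac{\mathbf{W}^2_\mathbf{m}(\mathbf{K}_{\delta,h}u^k_\tau,\overline{u}^{k-1}_\tau)-\mathbf{W}^2_\mathbf{m}(u^k_\tau,\overline{u}^{k-1}_\tau)}{2h}\leq \mathbf{U}_\delta(\overline{u}^{k-1}_\tau)-\mathbf{U}_\delta(u^k_\tau)$, i.e. the negative of the left-hand limsup above is bounded below by $-(\mathbf{U}_\delta(\overline{u}^{k-1}_\tau)-\mathbf{U}_\delta(u^k_\tau))$. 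Combining the two pieces yields
\begin{align*}
\frac{C_\alpha}{\tau^\alpha}\big(\mathbf{U}_\delta(u^k_\tau)-\mathbf{U}_\delta(\overline{u}^{k-1}_\tau)\big)
\leq -\langle u^k_\tau,\mathbf{G}(u^k_\tau)\rangle_{1-s} - \delta\|u^k_\tau\|^2_{\dot{H}^{1-s}(\R^d)}.
\end{align*}
Unpacking $\mathbf{U}_\delta = \int_{\R^d} g(u)\,dx + \delta\mathbf{U}$ and discarding the nonnegative terms $\delta\tfrac{C_\alpha}{\tau^\alpha}(\mathbf{U}(u^k_\tau)-\mathbf{U}(\overline{u}^{k-1}_\tau))$ — here I need $\mathbf{U}(\overline{u}^{k-1}_\tau)\ge\mathbf{U}(u^k_\tau)$, which follows from \eqref{F-convergence of u}, so this term can be moved to the favorable side or absorbed — and $\delta\|u^k_\tau\|^2_{\dot{H}^{1-s}}\ge 0$, then letting $\delta\to 0$, we are left with
$\langle u^k_\tau,\mathbf{G}(u^k_\tau)\rangle_{1-s}\leq \frac{C_\alpha}{\tau^\alpha}\big(\int_{\R^d}g(\overline{u}^{k-1}_\tau)\,dx - \int_{\R^d}g(u^k_\tau)\,dx\big)$.

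The final step is the algebraic identity $\langle v,\mathbf{G}(v)\rangle_{1-s} = \|\mathcal{G}(v)\|^2_{\dot{H}^{1-s}(\R^d)}$ where $\mathbf{G}(r)=\int_0^r\bm(z)g''(z)\,dz$ and $\mathcal{G}(r)=\int_0^r\sqrt{\bm(z)g''(z)}\,dz$. Using the integral representation \eqref{scalar product} for $\langle\cdot,\cdot\rangle_{1-s}$ (valid here since $1-s\in(0,1)$), both sides become double integrals against the kernel $|x-y|^{-d-2(1-s)}$, and the pointwise claim reduces to
$\big(v(x)-v(y)\big)\big(\mathbf{G}(v(x))-\mathbf{G}(v(y))\big)\ge \big(\mathcal{G}(v(x))-\mathcal{G}(v(y))\big)^2$; this is a one-variable inequality on $[0,\infty)$ that follows from Cauchy–Schwarz, since $\mathbf{G}'=\bm g''=(\mathcal{G}')^2$ gives $\int_a^b\mathbf{G}' \cdot (b-a)\ge(\int_a^b\mathcal{G}')^2$. (In fact equality need not hold pointwise, so the chain gives exactly the stated inequality.) The second displayed estimate then follows immediately by applying the fractional Sobolev inequality \eqref{Sobolev ineq} with $r=1-s\in(0,d/2)$ to $\mathcal{G}(u^k_\tau)$, giving $\|\mathcal{G}(u^k_\tau)\|_{L^{2d/(d-2(1-s))}(\R^d)}\le S_{d,1-s}\|\mathcal{G}(u^k_\tau)\|_{\dot{H}^{1-s}(\R^d)}$ and squaring. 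I expect the main obstacle to be the rigorous justification of the $h\to 0$ differentiation of $\|\mathbf{K}_{\delta,h}u^k_\tau\|^2_{\dot{H}^{-s}}$ and the interchange of limit and Fourier integral — i.e. verifying enough regularity/integrability of the nonlinear diffusion semigroup $\mathbf{K}_\delta$ applied to $u^k_\tau$ — together with the bookkeeping needed to discard the $\delta$-terms cleanly before sending $\delta\to0$; the convexity/Cauchy–Schwarz identity at the end is routine.
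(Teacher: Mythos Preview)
Your proposal is correct and follows essentially the same route as the paper: test the minimality of $u^k_\tau$ in \eqref{JKO schemes} against the competitor $\mathbf{K}_{\delta,h}u^k_\tau$, compute $\partial_h|_{h=0}\|\mathbf{K}_{\delta,h}u^k_\tau\|^2_{\dot{H}^{-s}}$ via Fourier, invoke Lemma~\ref{L-flows of modified equations for U_delta}, send $\delta\to 0$, then bound $\|\mathcal{G}(u^k_\tau)\|^2_{\dot{H}^{1-s}}\le\langle u^k_\tau,\mathbf{G}(u^k_\tau)\rangle_{1-s}$ via Cauchy--Schwarz in the kernel representation \eqref{scalar product}, and finish with the Sobolev inequality \eqref{Sobolev ineq}. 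One minor slip: the term $\delta\tfrac{C_\alpha}{\tau^\alpha}(\mathbf{U}(u^k_\tau)-\mathbf{U}(\overline{u}^{k-1}_\tau))$ is non\emph{positive} by \eqref{F-convergence of u}, not nonnegative, so it cannot be ``discarded'' by sign on that side --- but this is harmless, since (as the paper also does) you simply let $\delta\to 0$ and the term vanishes because $\mathbf{U}(\overline{u}^{k-1}_\tau)-\mathbf{U}(u^k_\tau)$ is finite.
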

	\begin{proof}
		First, we have 
		\begin{align*}
			\partial_h\big|_{h=0}\Vert \mathbf{K}_{\delta,h}u^k_\tau\Vert^2_{\dot{H}^{-s}(\R^d)}=&2\int_{\R^d}|\xi|^{-2s}\overline{\widehat{u^k_\tau}}\partial_h\big|_{h=0}\widehat{\mathbf{K}_{\delta,h}u^k_\tau}d\xi\\=&2\int_{\R^d}\overline{\widehat{(-\Delta)^{-s}u^k_\tau}}\partial_h\big|_{h=0}\left(\int_{\R^d}e^{-ix\xi}\mathbf{K}_{\delta,h}u^k_\tau dx\right)d\xi\\
			=&2\int_{\R^d}\overline{\widehat{(-\Delta)^{-s}u^k_\tau}}\left(\widehat{\Delta \mathbf{G}(u^k_\tau)}+\delta \widehat{\Delta u^k_\tau} \right) d\xi\\
			=&-2\langle u^k_\tau, (-\Delta)^{1-s}\mathbf{G}(u^k_\tau)\rangle-2\delta\Vert u^k_\tau\Vert^2_{\dot{H}^{1-s}(\R^d)}.
		\end{align*}
Next, by using the scheme (\ref{JKO schemes}) we obtain that
\begin{align*}
			\frac{C_\alpha}{2\tau^\alpha}\bW^2_\mathbf{m}(u^k_\tau,\overline{u}^{k-1}_\tau)+\frac{1}{2}\Vert u^k_\tau\Vert^2_{\dot{H}^{-s}(\R^d)}\leq\frac{C_\alpha}{2\tau^\alpha}\bW^2_\mathbf{m}(\mathbf{K}_{\delta,h}u^k_\tau,\overline{u}^{k-1}_\tau)+\frac{1}{2}\Vert \mathbf{K}_{\delta,h}u^k_\tau\Vert^2_{\dot{H}^{-s}(\R^d)}.
		\end{align*}
This yields, \begin{align*}
\frac{C_\alpha}{2\tau^\alpha}\limsup_{h\to 0}\frac{\bW^2_\mathbf{m}(u^k_\tau,\overline{u}^{k-1}_\tau)-\bW^2_\mathbf{m}(\mathbf{K}_{\delta,h}u^k_\tau,\overline{u}^{k-1}_\tau))}{h}\leq \frac{1}{2}\partial_h\big|_{h=0}\Vert \mathbf{K}_{\delta,h}u^k_\tau\Vert^2_{\dot{H}^{-s}(\R^d)}.
\end{align*}			
Therefore, using Lemma \ref{L-flows of modified equations for U_delta} we get that $$\langle u^k_\tau, (-\Delta)^{1-s}\mathbf{G}(u^k_\tau)\rangle + \delta\Vert u^k_\tau\Vert^2_{\dot{H}^{1-s}(\R^d)}\leq \frac{C_\alpha}{\tau^\alpha}\left[\mathbf{U}_\delta(\overline{u}^{k-1}_\tau)-\mathbf{U}_\delta(u^{k}_\tau)\right].$$
		Taking $\delta\to 0$, one gets that \begin{align*}
\langle \langle u^k_\tau, (-\Delta)^{1-s}\mathbf{G}(u^k_\tau)\rangle\rangle  \leq \frac{C_\alpha}{\tau^\alpha}\left(\int_{\R^d} g(\overline{u}^{k-1}_\tau(x))dx-\int_{\R^d} g(u^{k}_\tau(x))dx\right).
\end{align*}
		As $\mathbf{G}$ is non-decreasing, using \cite[Proposition 2.2]{LMS} we also have that $$\langle u^k_\tau, (-\Delta)^{1-s}\mathbf{G}(u^k_\tau)\rangle\geq 0.$$
		Notice that for any $a\leq b$, using Holder's inequality we have $$\left(\mathcal{G}(b)-\mathcal{G}(a)\right)^2=\left(\int_a^b\mathcal{G}'(z)dz\right)^2\leq (b-a)\int_a^b(\mathcal{G}'(z))^2dz=(b-a)(\mathbf{G}(b)-\mathbf{G}(a)).$$ 
		From this inequality and \eqref{scalar product} we get that \begin{align*}
			&\Vert \mathcal{G}(u^k_\tau)\Vert^2_{\dot{H}^{1-s}(\R^d)}\\
			&=C_{d,1-s}\int_{\R^d\times \R^d}\left(\mathcal{G}(u^k_\tau (x))-\mathcal{G}(u^k_\tau(y))\right)^2\vert x-y\vert ^{-d-2(1-s)}dxdy\\
			&\leq C_{d,1-s}\int_{\R^d\times \R^d}\left(u^k_\tau(x)-u^k_\tau(y)\right)\left(\mathbf{G}(u^k_\tau(x))-\mathbf{G}(u^k_\tau(y))\right)\vert x-y\vert ^{-d-2(1-s)}dxdy\\
			&=\langle u^k_\tau, \mathbf{G}(u^k_\tau)\rangle_{1-s}\\
			&=\langle u^k_\tau, (-\Delta)^{1-s}\mathbf{G}(u^k_\tau)\rangle. 
		\end{align*}
Moreover, as $1-s\in(0,d/2)$, using fractional Sobolev inequality one gets that \begin{align*}
\Vert \mathcal{G}(u^k_\tau)\Vert^2_{L^{\frac{2d}{d-2(1-s)}}(\R^d)}\leq S^2_{d,1-s}\Vert \mathcal{G}(u^k_\tau)\Vert^2_{\dot{H}^{1-s}(\R^d)}.
\end{align*} 		
Thus, we get the result.
\end{proof}

\section{Proof of main results}
\begin{proof}[Proof of Theorem \ref{main result}]$1.$ This follows from Lemma \ref{L-convergence of u}.\\\\
$2.$ Fix $T>0$, we consider a uniform partition of the time interval $[0,T]$ with $\tau=\frac{T}{N}$ for $N\in\N$. Let any $\psi\in C^\infty([0,T],\R_+)$ with $\psi(T)=0$. 
Since \eqref{formula of delta} and observe that $\mathbf{m}(z)=\left(z+\tau^{\frac{\alpha}{4(2-\beta)}}\right)^{\beta}$ we have that 
\begin{align*}
-\lambda_\delta =\beta\|D^2\varphi\|_{L^\infty}\sup_{z>0}\left(z+\tau^{\frac{\alpha}{4(2-\beta)}}\right)^{\beta-1}+\frac{\beta( 1-\beta)}{2\delta}\|\nabla \varphi\|^2_{L^\infty}\sup_{z>0}\left(z+\tau^{\frac{\alpha}{4(2-\beta)}}\right)^{2(\beta-1)}.
\end{align*}
Therefore, there exists $C_1(\varphi)>0$ such that for $\tau>0$ is small enough $$-\lambda_\delta\leq C_1(\varphi)\left(\tau^{\frac{\alpha (\beta-1)}{4(2-\beta)}}+\frac{1}{\delta \tau^{\frac{\alpha (1-\beta)}{2(2-\beta)}}}\right).$$
By \eqref{JKO schemes}, for every $k=1,\ldots, N$, one has \begin{align*}
0\leq \frac{C_{\alpha}}{2\tau^\alpha}\mathbf{W}_{\mathbf{m}}^2(u^k_{\tau}, \overline{u}^{k-1}_{\tau})\leq B_k,
\end{align*}
where $$B_k:=\frac{1}{2}\Vert \overline{u}^{k-1}_{\tau} \Vert^2_{\dot{H}^{-s}}-\frac{1}{2}\Vert u^k_{\tau}\Vert^2_{\dot{H}^{-s}}.$$ 
Then by Lemma \ref{L-flows of modified equations} we get that \begin{align*}
&\sum_{k=1}^N\frac{C_\alpha}{\tau^\alpha}\int_{(k-1)\tau}^{k\tau}\psi(t)\left(\mathbf{V}_{\tau^{\alpha/(4-2\beta)}}(u^k_{\tau})-\mathbf{V}_{\tau^{\alpha/(4-2\beta)}}(\overline{u}^{k-1}_{\tau})\right)dt\\
 \leq & \sum_{k=1}^N\int_{(k-1)\tau}^{k\tau}\psi(t)\langle \text{div}\left(\left(u^k_{\tau}+\tau^{\frac{\alpha}{4(2-\beta)}}\right)^{\beta}(\nabla(-\Delta)^{-s}u^k_{\tau}\right),\varphi\rangle dt\\
			&+C_1(\varphi)\left(\tau^{\frac{\alpha (\beta-1)}{4(2-\beta)}}+\tau^{-\alpha/2}\right)\sum_{k=1}^NB_k \int_{(k-1)\tau}^{k\tau}\psi(t)dt.
\end{align*}
As  $\psi\in C^\infty([0,T],\R_+)$, there exists $C_2(\psi)>0$ such that $$\sum_{k=1}^NB_k \int_{(k-1)\tau}^{k\tau}\psi(t)dt\leq C_2(\psi)\tau\sum_{k=1}^NB_k.$$
Moreover, observe that $\|\cdot\|^2_{\dot{H}^{-s}}$ is convex, applying Lemma \ref{L-estimate of u bar}, we have 
\begin{align*}
			\sum_{k=1}^N\left(\Vert \overline{u}^{k-1}_{\tau} \Vert^2_{\dot{H}^{-s}}-\Vert u^k_{\tau}\Vert^2_{\dot{H}^{-s}}\right)\leq \left(\frac{T}{\tau}\right)^{1-\alpha}\|u_0\|^2_{\dot{H}^{-s}}.
					\end{align*}  
Thus,  \begin{align*}
\sum_{k=1}^NB_k\int_{(k-1)\tau}^{k\tau}\psi(t)dt\leq\frac{1}{2}C_2(\psi)\tau^{\alpha}T^{1-\alpha}\|u_0\|^2_{\dot{H}^{-s}}.
\end{align*}		
From this inequality and Lemma \ref{L-estimate of solution} we get that \begin{align}\label{F-main result}
&\sum_{k=1}^N\frac{C_\alpha}{\tau^\alpha}\int_{(k-1)\tau}^{k\tau}\psi(t)\left(\mathbf{V}_{\tau^{\alpha/(4-2\beta)}}(u^k_{\tau})-\mathbf{V}_{\tau^{\alpha/(4-2\beta)}}(\overline{u}^{k-1}_{\tau})\right)dt\notag\\
\leq & C(\psi,\varphi) \tau^{\frac{\alpha\beta}{4(2-\beta)}}+\int_{0}^{T}\psi(t)\langle \textup{div}\left(\hat{u}_{\tau}^{\beta}(t)(\nabla(-\Delta)^{-s}\hat{u}_{\tau}(t)\right),\varphi\rangle dt\notag\\
&+\frac{1}{2}C_1(\varphi)C_2(\psi)T^{1-\alpha}\left(\tau^{\frac{\alpha (7-3\beta)}{4(2-\beta)}}+\tau^{\alpha/2}\right)\|u_0\|^2_{\dot{H}^{-s}}.
\end{align}
Next, we will calculate the LHS of \eqref{F-main result}. By definition of $\mathbf{V}_\delta$ one has that \begin{align}\label{LHS-main result}
&\sum_{k=1}^N\frac{C_\alpha}{\tau^\alpha}\int_{(k-1)\tau}^{k\tau}\psi(t)\left(\mathbf{V}_{\tau^{\alpha/(4-2\beta)}}(u^k_{\tau})-\mathbf{V}_{\tau^{\alpha/(4-2\beta)}}(\overline{u}^{k-1}_{\tau})\right)dt\notag\\
=&\sum_{k=1}^N\frac{C_\alpha}{\tau^\alpha}\int_{(k-1)\tau}^{k\tau}\int_{\R^d}\left(u^k_{\tau}-\overline{u}^{k-1}_{\tau}\right)\psi(t)\varphi(x)dxdt\notag\\
&-\sum_{k=1}^N\frac{C_\alpha}{\tau^\alpha} \tau^{\frac{\alpha}{2(2-\beta)}} \left(\mathbf{U}(\overline{u}^{k-1}_{\tau})-\mathbf{U}(u^k_{\tau})\right)\int_{(k-1)\tau}^{k\tau}\psi(t)dt.
\end{align}
By the definition of $\overline{u}^{k-1}_{\tau}$ and using Lemma \ref{L-approximate fractional 1}, one gets that \begin{align}\label{F-main result 1}
&\sum_{k=1}^N\frac{C_\alpha}{\tau^\alpha}\int_{(k-1)\tau}^{k\tau}\int_{\R^d}\left(u^k_{\tau}-\overline{u}^{k-1}_{\tau}\right)\psi(t)\varphi(x)dxdt\notag\\
=& \sum_{k=1}^N\frac{C_\alpha}{\tau^\alpha}\int_{(k-1)\tau}^{k\tau}\int_{\R^d}\left(u^k_{\tau}-\sum_{i=0}^{k-1}(-b^{(k)}_{k-i})u^i_{\tau}\right)\psi(t)\varphi(x)dxdt\notag\\
=&  \int_{\mathbb{R}^d}\varphi(x)\sum_{k=1}^N\int_{(k-1)\tau}^{k\tau}\hat{u}_{\tau}(t)\left(C_\alpha\tau^{-\alpha}\sum_{i=k}^N b_{i-k}^{N-k}\psi(t+(i-k)\tau)\right)dtdx\notag \\
&+ \int_{\mathbb{R}^d}u(0)\varphi(x) C_\alpha\tau^{-\alpha}\sum_{k=1}^N b^{(k)}_k\int_{(k-1)\tau}^{k\tau}\psi(t)dtdx.
\end{align}
As (\ref{F-convergence of u})  one has $\mathbf{U}(\overline{u}^{k-1}_{\tau})-\mathbf{U}(u^k_{\tau})\geq 0$ for every $k\in\N$. Moreover, since (\ref{F-bounded of U 2}) and notice that $\psi\in C^\infty([0,T],\R_+)$, there exists $C_3(\psi)>0$ such that
\begin{align*}
\sum_{k=1}^N\frac{C_\alpha}{\tau^\alpha} \tau^{\frac{\alpha}{2(2-\beta)}}\left(\mathbf{U}(\overline{u}^{k-1}_{\tau})-\mathbf{U}(u^k_{\tau})\right)\int_{(k-1)\tau}^{k\tau}\psi(t)dt &\leq C_3(\psi)\tau \frac{C_\alpha}{\tau^\alpha} \tau^{\frac{\alpha}{2(2-\beta)}}\sum_{k=1}^N\left(\mathbf{U}(\overline{u}^{k-1}_{\tau})-\mathbf{U}(u^k_{\tau})\right)\\
& \leq C_3(\psi)C_\alpha \tau^{1-\alpha}\tau^{\frac{\alpha}{2(2-\beta)}}T^{1-\alpha}\mathbf{U}(u_0).
\end{align*}
Therefore, \begin{align}\label{F-main result 2}
\lim_{\tau\to 0}\sum_{k=1}^N\frac{C_\alpha}{\tau^\alpha} \tau^{\frac{\alpha}{2(2-\beta)}}\left(\mathbf{U}(\overline{u}^{k-1}_{\tau})-\mathbf{U}(u^k_{\tau})\right)\int_{(k-1)\tau}^{k\tau}\psi(t)dt=0.
\end{align}
By using \eqref{LHS-main result}, \eqref{F-main result 1}, \eqref{F-main result 2}, Lemma \ref{L-approximate fractional} and observe that $\hat{u}_{\tau}$ converges weakly to $u$ in $L^2((0,T),\dot{H}^{1-s}(\R^d))$ as $\tau\to 0$, we have \begin{align}\label{F-main result 3}
\lim_{\tau\to 0} \text{LHS}\eqref{F-main result}=\int_0^T{}_tD^\alpha_T\psi(t)\langle u(t),\varphi\rangle dt-\frac{\langle u(0),\varphi\rangle}{\Gamma(1-\alpha)}\int_0^Tt^{-\alpha}\psi(t)dt.
\end{align}
Since \eqref{F-main result 3}, in both of side of \eqref{F-main result} taking $\tau\to 0$ we obtain that
\begin{align}\label{F-main result 4}
\int_0^T{}_tD^\alpha_T\psi(t)\langle u(t),\varphi\rangle dt-\frac{\langle u(0),\varphi\rangle}{\Gamma(1-\alpha)}\int_0^Tt^{-\alpha}\psi(t)dt \leq \int_0^T \psi(t)\langle\text{div}\left( u^{\beta} \nabla(-\Delta)^{-s}u(t)\right),\varphi\rangle dt,
\end{align}
for every $\psi\in C_c^\infty([0,T],\R_+),\varphi\in C_c^\infty (\R^d)$. Similarly, by replacing $\psi$ by $-\psi$, we get the inverse inequality of \eqref{F-main result 4}. This implies that 
\begin{align*}
\int_0^T{}_tD^\alpha_T\psi(t)\langle u(t),\varphi\rangle dt =\int_0^T \psi(t)\langle\text{div}\left( u^{\beta} \nabla(-\Delta)^{-s}u(t)\right),\varphi\rangle dt+\frac{\langle u(0),\varphi\rangle}{\Gamma(1-\alpha)}\int_0^Tt^{-\alpha}\psi(t)dt.
\end{align*}	
Hence, we get the result.\\\\
$3.$ For $g(z)=z^p$ then for every $z\geq 0$ and and $\tau\leq 1$ one has \begin{align*}
\mathcal{G}(z)&=\sqrt{p(p-1)}\int_0^z\left(t+\tau^{\frac{\alpha}{4(2-\beta)}}\right)^{\beta}t^{(p-2)/2}dt\\
&\geq \sqrt{p(p-1)}\int_0^zt^{(\beta+ p-2)/2}dt\\
& = \frac{2\sqrt{p(p-1)}}{\beta + p}z^{(\beta+p)/2}.
\end{align*}
Hence, from Lemma \ref{L-estimate solution}, we have \begin{align*}
\frac{4p(p-1)\tau^\alpha}{(\beta + p)^2C_\alpha S^2_{d,1-s}}\| u^k_\tau\|_{L^{\frac{(\beta +p)d}{d-2(1-s)}}(\R^d)}^{\beta+p}\leq \|\overline{u}^{k-1}_\tau\|_{L^p(\R^d)}^p-\|u^{k}_\tau\|_{L^p(\R^d)}^p.
\end{align*}
Now, for $1\leq q<p$ we set \begin{align*}
\eta_1: = \frac{\frac{\beta +p}{q}-1+\frac{2(1-s)}{d}}{\frac{1}{q}-\frac{1}{p}},\; \eta_2 := \frac{\frac{\beta }{p}+\frac{2(1-s)}{d}}{\frac{1}{q}-\frac{1}{p}}\text{ and } \eta_3:=\beta+p.
\end{align*} 
Then we have \begin{align*}
\eta_1=\eta_2+\eta_3 \text{ and }\frac{\eta_1}{p}=\frac{\eta_2}{q}+\eta_3 \frac{d-2(1-s)}{(\beta +p)d}.
\end{align*}
Therefore, by interpolation inequality one gets that \begin{align*}
\| u^k_\tau\|_{L^{\frac{(\beta +p)d}{d-2(1-s)}}(\R^d)}^{\eta_3}\|u^k_\tau\|^{\eta_2}_{L^q(\R^d)}\geq \|u^k_\tau\|^{\eta_1}_{L^p(\R^d)}.
\end{align*}
This yields,
\begin{align}\label{F-deference u^(k-1) and u^k}
\frac{4p(p-1)\tau^\alpha}{(\beta+p)^2C_\alpha S^2_{d,1-s}}\|u^k_\tau\|^{\eta_1}_{L^p(\R^d)}\|u^k_\tau\|^{-\eta_2}_{L^q(\R^d)}\leq \|\overline{u}^{k-1}_\tau\|_{L^p(\R^d)}^p-\|u^{k}_\tau\|_{L^p(\R^d)}^p.
\end{align}
Next, for every $k\in\mathbb{N}$, we will check that \begin{align}\label{F-bounded by u_0 1}
\|u^k_\tau\|_{L^{p}(\R^d)}\leq \|u_0\|_{L^{p}(\R^d)}.
\end{align}
It is clear that \eqref{F-bounded by u_0 1} is true for $k=0$. Assume that \eqref{F-bounded by u_0 1} is true for $k$ ($k\geq 0$). From \eqref{F-deference u^(k-1) and u^k}, Lemma \ref{L-sum of b_i} and observe that $g(z)=z^p$ is convex one gets
\begin{align*}
\|u^{k+1}_\tau\|_{L^{p}(\R^d)}^p &\leq \|\overline{u}^{k}_\tau\|_{L^p(\R^d)}^p\\
& = \|\sum_{i=1}^k(-b_{k+1-i}^{(k+1)})u^i_\tau\|_{L^p(\R^d)}^p\\
& \leq \sum_{i=1}^k(-b_{k+1-i}^{(k+1)})\|u^i_\tau\|_{L^p(\R^d)}^p\\
& \leq \sum_{i=1}^k(-b_{k+1-i}^{(k+1)})\|u_0\|_{L^p(\R^d)}^p\\
& = \|u_0\|_{L^p(\R^d)}^p.
\end{align*}
Hence, by induction we get \eqref{F-bounded by u_0 1}. Similarly, by \eqref{F-bounded by u_0 1} and definition of $\overline{u}^{k-1}_\tau$, we also have 
\begin{align} \label{F-bounded by u_0 2}
\|\overline{u}^{k-1}_\tau\|_{L^{p}(\R^d)}\leq \|u_0\|_{L^{p}(\R^d)}.
\end{align}
Since \eqref{F-deference u^(k-1) and u^k}, \eqref{F-bounded by u_0 1} and \eqref{F-bounded by u_0 2} we obtain that \begin{align*}
\frac{4p(p-1)\tau^\alpha}{(\beta+p)^2C_\alpha S^2_{d,1-s}}\|u^k_\tau\|^{\eta_1}_{L^p(\R^d)}\|u_0\|^{-\eta_2}_{L^q(\R^d)}\leq \|u_0\|_{L^p(\R^d)}^p.
\end{align*}
This implies that \begin{align}\label{F-bounded by u_0 3}
 \|u^k_\tau\|_{L^p(\R^d)}\leq \bigg(\frac{(\beta + p)^2}{4p(p-1)}C_\alpha S^2_{d,1-s}\tau^{-\alpha}\|u_0\|^{\eta_2}_{L^q(\R^d)}\|u_0\|_{L^p(\R^d)}^{p}\bigg)^{\frac{1}{\eta_1}}.
\end{align}
Thus, the result follows  \eqref{F-bounded by u_0 1} and \eqref{F-bounded by u_0 3}. 
\end{proof}\\\\
\begin{proof}[Proof of Theorem \ref{main result2}]$1.$ This follows from Lemma \ref{L-convergence of u} .\\\\
$2.$ Similar as above, we also fix $T>0$ and consider $\tau=\frac{T}{N}$ for $N\in \N$. Let any $\psi\in C^\infty([0,T],\R_+)$ with $\psi(T)=0$. As $\mathbf{m}(z)=(z+1)^{\tau^{1-\alpha/4}}$ we have that 
\begin{align*}
-\lambda_\delta =\tau^{1-\alpha/4}\|D^2\varphi\|_{L^\infty}\sup_{z>0}(z+1)^{\tau^{1-\alpha/4}-1}+\frac{\tau^{1-\alpha/4}\vert \tau^{1-\alpha/4}-1 \vert}{2\delta}\|\nabla \varphi\|^2_{L^\infty}\sup_{z>0}(z+1)^{2(\tau^{1-\alpha/4}-1)}.
\end{align*}
Therefore, there exists $C_4(\varphi)>0$ such that for $\tau>0$ is small enough $$-\lambda_\delta\leq C_4(\varphi)\tau^{1-\alpha/4}(1+1/\delta).$$
By the same arguments in the proof of Theorem \ref{main result} and Lemma \ref{L-estimate of solution}, there exists $C_5(\psi,\varphi), C_6(\psi)>0$ such that \begin{align}\label{F-main result-beta=0}
&\sum_{k=1}^N\frac{C_\alpha}{\tau^\alpha}\int_{(k-1)\tau}^{k\tau}\psi(t)\left(\mathbf{V}_{\tau^{\alpha/4}}(u^k_{\tau})-\mathbf{V}_{\tau^{\alpha/4}}(\overline{u}^{k-1}_{\tau})\right)dt\notag\\
\leq & \tau^{1-\alpha/4}C_5(\psi,\varphi)-\int^{T}_0\psi(t)\langle (-\Delta)^{1-s}\hat{u}_{\tau}(t),\varphi\rangle dt+\frac{1}{2}C_6(\varphi)K_2(\psi)T^{1-\alpha}(\tau^{1+3\alpha/4}+\tau^{1+\alpha/2})\|u_0\|^2_{\dot{H}^{-s}}.
\end{align}
Moreover, 
\begin{align}\label{LHS-main result-beta=0}
\text{LHS}\eqref{F-main result-beta=0}=&\sum_{k=1}^N\frac{C_\alpha}{\tau^\alpha}\int_{(k-1)\tau}^{k\tau}\int_{\R^d}\left(u^k_{\tau}-\overline{u}^{k-1}_{\tau}\right)\psi(t)\varphi(x)dxdt\notag\\
&-\sum_{k=1}^NC_\alpha\tau^{-3\alpha/4}\left(\mathbf{U}(\overline{u}^{k-1}_{\tau})-\mathbf{U}(u^k_{\tau})\right)\int_{(k-1)\tau}^{k\tau}\psi(t)dt.
\end{align}
Since $\psi\in C^\infty([0,T],\R_+)$, there exists $C_7(\psi)>0$ such that
\begin{align*}
\sum_{k=1}^NC_\alpha\tau^{-3\alpha/4}\left(\mathbf{U}(\overline{u}^{k-1}_{\tau})-\mathbf{U}(u^k_{\tau})\right)\int_{(k-1)\tau}^{k\tau}\psi(t)dt &\leq C_7(\psi)\tau C_\alpha\tau^{-3\alpha/4}\sum_{k=1}^N\left(\mathbf{U}(\overline{u}^{k-1}_{\tau})-\mathbf{U}(u^k_{\tau})\right)\\
& \leq C_7(\psi)C_\alpha \tau^{\alpha/4}T^{1-\alpha}\mathbf{U}(u_0).
\end{align*}
This implies that \begin{align}\label{F-main result 2-beta=0}
\lim_{\tau \to 0}\sum_{k=1}^NC_\alpha\tau^{-3\alpha/4}\left(\mathbf{U}(\overline{u}^{k-1}_{\tau})-\mathbf{U}(u^k_{\tau})\right)\int_{(k-1)\tau}^{k\tau}\psi(t)dt=0.
\end{align}
From \eqref{F-main result 1}, \eqref{LHS-main result-beta=0}, \eqref{F-main result 2-beta=0}, Lemma \ref{L-approximate fractional} and $\hat{u}_{\tau}$ converges weakly to $u$ in $L^2((0,T),\dot{H}^{1-s}(\R^d))$ as $\tau\to 0$, we have 
\begin{align*}
\lim_{\tau\to 0} \text{LHS}\eqref{F-main result-beta=0}=\int_0^T{}_tD^\alpha_T\psi(t)\langle u(t),\varphi\rangle dt-\frac{\langle u(0),\varphi\rangle}{\Gamma(1-\alpha)}\int_0^Tt^{-\alpha}\psi(t)dt.
\end{align*}
Hence, from this equality, taking $\tau\to 0$ in both of side of \eqref{F-main result-beta=0} one gets that
\begin{align*}
\int_0^T{}_tD^\alpha_T\psi(t)\langle u(t),\varphi\rangle dt-\frac{\langle u(0),\varphi\rangle}{\Gamma(1-\alpha)}\int_0^Tt^{-\alpha}\psi(t)dt \leq -\int_0^T \psi(t)\langle (-\Delta)^{1-s}u(t),\varphi\rangle dt,
\end{align*}
for every $\psi\in C_c^\infty([0,T],\R_+),\varphi\in C_c^\infty (\R^d)$. \\
Therefore, by replacing $\psi$ by $-\psi$, we get the result.\\\\
$3.$ For $g(z)=z^p$ and $m(z)=(z+1)^{\tau-\alpha/4}$, we have that
\begin{align*}
\mathcal{G}(z)&=\sqrt{p(p-1)}\int_0^z(t+1)^{\tau^{1-\alpha/4}/2}t^{(p-2)/2}dt\\
&\geq \sqrt{p(p-1)}\int_0^zt^{(\tau^{1-\alpha/4}+ p-2)/2}dt\\
& = \frac{2\sqrt{p(p-1)}}{\tau^{1-\alpha/4}+p}z^{(\tau^{1-\alpha/4}+p)/2}\\
&\geq \frac{2\sqrt{p(p-1)}}{p+1}z^{(\tau^{1-\alpha/4}+p)/2}.
\end{align*}
Furthermore, if we set $\theta_3:=\tau^{1-\alpha/4}+p$ then
\begin{align*}
\theta_1=\theta_2+\theta_3 \text{ and }\frac{\theta_1}{p}=\frac{\theta_2}{q}+\theta_3 \frac{d-2(1-s)}{(\tau^{1-\alpha/4} +p)d}.
\end{align*}
Therefore, by the same method as in the proof of Theorem \ref{main result} (3), we get the result.
\end{proof}


\end{document}